\newtheorem{thm}{Theorem}[section]
\newtheorem{cor}[thm]{Corollary}
\newtheorem{lem}[thm]{Lemma}
\newtheorem{prop}[thm]{Proposition}
\theoremstyle{definition}
\newtheorem{defn}{Definition}
\newtheorem{pro}{Question}
\newtheorem{ex}[thm]{Example}
\theoremstyle{remark}
\newtheorem{rem}[thm]{Remark}
\DeclareMathOperator{\N}{\mathbb{N}}
\DeclareMathOperator{\Z}{\mathbb{Z}}
\DeclareMathOperator{\F}{\mathbb{F}}
\begin{document}

\author[P. Danchev]{Peter Danchev}
\address{Department of Mathematics, Plovdiv University, Bulgaria}
\email{pvdanchev@yahoo.com}
\author[J. Matczuk]{Jerzy Matczuk}
\address{Institute of Mathematics, Warsaw University, Poland}
\email{jmatczuk@mimuw.edu.pl}

\title[$n$-Torsion Clean Rings] {$n$-Torsion Clean Rings  }
\keywords{clean rings, strongly clean rings, $n$-torsion clean rings, strongly $n$-torsion clean rings, idempotents, nilpotents, units}
\subjclass[2010]{16D60; 16S34; 16U60}

\maketitle

\begin{abstract} Let $n$ be an arbitrary natural number. The class of (strongly) $n$-torsion clean rings is introduced and investigated. Abelian $n$-torsion clean rings are somewhat characterized and  a complete characterization of strongly $n$-torsion clean rings is given in the case when $n$ is odd. Some open questions are posed at the end.
\end{abstract}

\section*{Introduction}

Everywhere in the text, all rings are assumed to be associative with unity. Our notations and notions are in agreement with those from \cite{L}. For instance, for such a ring $R$, $U(R)$ denotes the group of units, ${\rm Id}(R)$ the set of idempotents and $J(R)$ the Jacobson radical of $R$, respectively. Besides, the finite field with $m$ elements will be denoted by $\mathbb{F}_m$, and $\mathbb{M}_k(R)$ will stand for the $k\times k$ matrix ring over $R$; $k\in \mathbb{N}$. For an element $u$ of a group $G$, $o(u)$ will denote the order of $u$. The symbol $LCM(n_1,\ldots , n_k)$ will be reserved for the least common multiple of $n_1,\ldots, n_k\in \N$.

We will say a nil ideal $I$ of $R$ is nil of index $k$ if, for any $r\in I$, we have $r^k=0$ and $k$ is the minimal natural number with this property. Likewise, we will say that $I$ is nil of bounded index if it is nil of index $k$, for some fixed $k$.

Let us recall that a ring $R$ is said to be {\it clean} if, for every $r\in R$, there are $u\in U(R)$ and $e\in {\rm Id}(R)$ with $r=e+u$. If, in addition, the commutativity condition $ue=eu$ is satisfied, the clean ring $R$ is called {\it strongly clean}. These rings were introduced  by Nicholson in \cite{N} and \cite{Ni}. Both clean rings and their either specializations or generalizations are intensively studied since then (see, for example, \cite{BDZ}, \cite{C}, \cite{D}, \cite{DM}, \cite{Di}, \cite{M} and references within).

A decomposition $r=e+u$ of an element $r$ in a ring $R$ will be called {\it $n$-torsion clean} decomposition of $r$ if $e\in {\rm Id}(R)$ and $u\in U(R)$ is $n$-torsion, i.e. $u^n=1$. We will say that such a decomposition of $r$ is strongly $n$-torsion clean, if additionally $e$ and $u$ commute.

The aim of this article is to investigate in detail the following proper subclasses of (strongly) clean rings:

\begin{defn}
 \label{tors} A ring $R$ is said to be ({\it strongly}) {\it $n$-torsion clean} if there is $n\in \N$  such that every element of $R$ has a (strongly) $n$-torsion clean decomposition and $n$ is the smallest possible natural number with the above property.
\end{defn}

It is easy to see that boolean rings are precisely the rings which are (strongly) $1$-torsion clean. Thus the introduced above classes of rings can be treated as a natural generalization of boolean rings.

Let us notice that in \cite{D} the class of ({\it strongly) invo-clean} rings was investigated. In our terminology, (strongly) invo-clean rings are precisely rings which are either (strongly) $1$-torsion clean or (strongly) $2$-torsion clean.

It is clear that every clean ring having the unit group of bounded exponent $s$ is $n$-torsion clean for some $n$ with $1\leq n\leq s$. We will see below that $n$ has to divide $s$, but does not have to be equal to $s$. Let us also observe  that a homomorphic image of an $n$-torsion clean ring is always $m$-torsion clean, for some $m\leq n$. Hoverer, it is not clear whether $n$ is a multiple of $m$. Notice that finite rings, being always clean, are $n$-torsion clean for suitable $n$ and it would be of interest to compute $n$ for some classes of finite  rings; for instance, for matrix rings over finite fields.

In the present paper we mainly concentrate on the case of strongly $n$-torsion clean rings. Our work is organized as follows: The first short section is of introductory character  and it contains some basic observations and examples. Strongly $n$-torsion clean rings are   investigated in Section 2. In particular, it is shown in Theorem~\ref{pi} that such rings have to satisfy a polynomial identity of degree $2n$ and that their Jacobson radical is nil of bounded index.  Theorem \ref{ab} offers a description of such rings which are abelian. It appears, which seems to be slightly surprising, that when $n$ is odd, strongly $n$-torsion clean rings have to be commutative. Their precise description is given in the subsequent Theorem~\ref{comm}. We finish off with some  open  questions.

\section{Preliminaries and Examples}

We begin with the following simple but useful observation. Its proof is provided for the sake of completeness.

\begin{lem}\label{descrip}
Let $R$ be a  (strongly) $n$-torsion clean ring. Then there exist a finite number of elements $r_1,\ldots, r_k\in R$ with (strongly) clean decompositions $r_i=e_i+u_i$, $1\leq i\leq k$, such that $n=LCM(o(u_1),\ldots,o(u_k))$. In particular:
\begin{enumerate}
   \item  When the group $U(R)$ has finite exponent $s$, then $n$ divides $s$.
   \item  When $R$ is commutative, then $U(R)$ contains an element of order $n$.
\end{enumerate}
\end{lem}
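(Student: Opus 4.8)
The plan is to exploit the minimality of $n$ that is built into Definition~\ref{tors}. For every $r\in R$ fix a (strongly) $n$-torsion clean decomposition $r=e_r+u_r$, so that $u_r^n=1$ and hence $o(u_r)$ divides $n$. Consequently, as $r$ ranges over $R$, the orders $o(u_r)$ take only finitely many values, all of them among the (finitely many) divisors of $n$. Choosing elements $r_1,\dots,r_k\in R$ whose associated units realize every value of $o(u_r)$ that actually occurs, and setting $m=LCM(o(u_{r_1}),\dots,o(u_{r_k}))$, we immediately get $m\mid n$.

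Next I would show that in fact $m=n$. By construction, $o(u_r)$ is one of the values $o(u_{r_i})$, each of which divides $m$, so $o(u_r)\mid m$ and therefore $u_r^m=1$ for every $r\in R$. Thus $r=e_r+u_r$ is a (strongly) $m$-torsion clean decomposition of $r$ (the commutativity $e_ru_r=u_re_r$ is inherited in the strongly case), so every element of $R$ admits a (strongly) $m$-torsion clean decomposition. Minimality of $n$ in Definition~\ref{tors} then gives $n\le m$, which together with $m\mid n$ forces $m=n$. Renaming $e_{r_i},u_{r_i}$ as $e_i,u_i$ yields the desired elements and the equality $n=LCM(o(u_1),\dots,o(u_k))$.

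For statement (1), if $U(R)$ has finite exponent $s$ then $u^s=1$ for every unit, so $o(u_i)\mid s$ for each $i$, whence $n=LCM(o(u_1),\dots,o(u_k))$ divides $s$. For statement (2), when $R$ is commutative the units $u_1,\dots,u_k$ lie in the abelian group $U(R)$, and I would invoke the standard fact that in an abelian group a finite family of elements of orders $a_1,\dots,a_k$ admits, as a product of suitable powers of them, an element of order $LCM(a_1,\dots,a_k)$: decompose the least common multiple into prime powers, realize each prime-power factor as the order of an appropriate power of one of the $u_i$, and multiply these commuting elements of pairwise coprime orders. Applying this to $u_1,\dots,u_k$ produces an element of $U(R)$ of order exactly $n$.

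The only point that needs a moment of care is the finiteness observation — that the orders $o(u_r)$ assume only finitely many values — but this is immediate once one notes each such order divides the single fixed integer $n$; beyond that, the argument is just the minimality trick of Definition~\ref{tors} together with the elementary abelian-group fact used in part (2).
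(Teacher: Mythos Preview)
Your proof is correct and follows essentially the same strategy as the paper: observe that the orders of the units in the chosen decompositions all divide $n$ (hence assume only finitely many values), take their LCM, and invoke the minimality of $n$ to conclude that this LCM equals $n$. The only cosmetic difference is that the paper assigns to each $r$ the \emph{minimal} order $r_{min}$ arising among all its $n$-torsion clean decompositions, whereas you simply fix one decomposition per element; your variant is slightly leaner and works just as well, and your treatment of (2) is more explicit than the paper's.
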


\begin{proof} For $r\in R$, let us set
$$
r_{min}=\min\{o(u)\mid r=e+u \mbox{ is a (strongly) $n$-torsion clean   decomposition of $r$ and $o(u)$ divides $n$}\}.
$$
Then each $r_{min}$ divides $n$. Thus    $LCM(r_{min}\mid r\in R)$ exists and also divides $n$. Moreover, we can pick   elements $r_1,\ldots, r_k\in R$ such that  $LCM(r_{min}\mid r\in R)=LCM( {r_1}_{min},\ldots,{r_k}_{min})$. The minimality of $n$ gives $LCM(r_{min}\mid r\in R)=n$. This completes the proof of the main statement.
Subsequently, (1) and (2) follow.
\end{proof}

It is well known that $1+J(R)\subseteq  U(R)$. In the class of rings for which the equality holds, the notation of $n$-torsion clean rings boils down to rings $R$ for which the unit group $U(R)$ is of finite exponent $n$. Indeed, we have:

\begin{prop}\label{U(R)} Let $R$ be a ring and $n\in \N$. Then:
\begin{enumerate}
\item  If $r\in J(R)$, then the unit $1+r$ has exactly one clean decomposition.
\item  Suppose $U(R)=1+J(R)$. Then the following two conditions are equivalent:
\begin{enumerate}
\item  $R$ is (strongly) $n$-torsion clean.
\item  $R$ is (strongly) clean and the group $U(R)$ is of finite exponent $n$.
\end{enumerate}
\end{enumerate}

Moreover, if one of the equivalent conditions holds, then $R/J(R)$ is a boolean ring.
\end{prop}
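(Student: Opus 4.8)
The plan is to prove (1) by reducing modulo $J(R)$, and then to deduce the equivalence in (2) and the final clause from (1) together with Lemma~\ref{descrip} and the minimality built into the definition of $n$.

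For (1) I would take an arbitrary clean decomposition $1+r=e+u$ of the unit $1+r$, with $r\in J(R)$, and pass to the quotient $R/J(R)$. Writing bars for images, the relation becomes $\bar 1=\bar e+\bar u$, so $\bar u=\bar 1-\bar e$ is at once an idempotent and a unit of $R/J(R)$; since an idempotent unit in any ring equals the identity, $\bar u=\bar 1$ and $\bar e=\bar 0$. Then $e$ is an idempotent lying in $J(R)$, and $J(R)$ has no nonzero idempotents, so $e=0$ and $u=1+r$. This forces uniqueness. This single observation --- the ``idempotent unit is trivial'' trick applied after killing the radical --- is the only real idea; I expect it to be the crux, though it is brief.

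For (2), assume $U(R)=1+J(R)$, and record first the following consequence of (1): for each $r\in J(R)$ the only clean decomposition of $1+r$ is $0+(1+r)$, which is trivially also \emph{strongly} clean, so any (strongly) $m$-torsion clean decomposition of $1+r$ must coincide with it and hence satisfies $(1+r)^m=1$. Thus, whenever every element of $R$ admits a (strongly) $m$-torsion clean decomposition, the group $U(R)=1+J(R)$ has exponent dividing $m$. Granting this: if (b) holds then, since $u^n=1$ for every unit $u$, every (strongly) clean decomposition is already (strongly) $n$-torsion clean, so $R$ is (strongly) $m$-torsion clean for some $m$; by Lemma~\ref{descrip}(1) we get $m\mid n$, while the recorded consequence gives $n\mid m$, hence $m=n$, which is (a). Conversely, if (a) holds then $R$ is plainly (strongly) clean, and the recorded consequence shows the exponent of $U(R)$ divides $n$; were that exponent some $s<n$, then, taking any (strongly) clean decomposition of each element and using $u^s=1$, every element would have a (strongly) $s$-torsion clean decomposition, contradicting the minimality of $n$. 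Hence the exponent of $U(R)$ is exactly $n$, which is (b).

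For the final clause: under either of the equivalent conditions $R$ is clean, so $R/J(R)$ is clean; and the canonical map $U(R)\to U(R/J(R))$ is onto (units lift modulo $J(R)$ because $1+J(R)\subseteq U(R)$) with kernel $1+J(R)=U(R)$, so $U(R/J(R))=\{1\}$. A clean ring $S$ with $U(S)=\{1\}$ is boolean: applying cleanness to $0$ makes $-1$ an idempotent, so $2=0$ in $S$, and then for arbitrary $x\in S$, writing $x=e+1$ with $e$ an idempotent and squaring $x-1=e$ yields $x^2=x$. Therefore $R/J(R)$ is boolean.
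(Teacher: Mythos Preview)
Your proof is correct and follows the same approach as the paper's: the ``idempotent unit equals $1$'' trick for (1) (the paper does it directly in $R$ via $1-e=u-r\in{\rm Id}(R)\cap U(R)=\{1\}$ rather than passing to $R/J(R)$, but the idea is identical), the interplay between minimality of $n$ and the exponent of $U(R)$ for (2), and the observation $U(R/J(R))=\{1\}$ for the final clause. Your handling of (2) is actually more careful than the paper's terse version, which only loosely indicates the two inequalities without explicitly invoking Lemma~\ref{descrip} to pin down $n=\exp(U(R))$; and for the boolean conclusion the paper simply notes that $R/J(R)$ is $1$-torsion clean, whereas you unpack this directly---both are fine.
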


\begin{proof}
(1) Let $r\in J(R)$. Observe that if $1+r=e+u$ is a clean decomposition of $1+r$, then $1-e=u-r\in {\rm Id}(R)\cap U(R)=\{1\}$, that is, $e=0$. This implies that $1+r$ has the unique clean decomposition $r+1=0+(1+r)$.

(2) Suppose $R$ is (strongly) $n$-torsion clean and $u\in U(R)=1+J(R)$. Then, by (1), $u=0+u$ is the only clean decomposition of $u$ and $u^n=1$ follows, i.e. $U(R)$ is of finite exponent $s\leq n$.

Conversely suppose that $R$ is (strongly) clean and $U(R)$ is a group of finite exponent $s$. Then it is clear that $R$ is $n$-torsion clean ring, for some $n\leq s$. This yields the equivalence $(a)\Leftrightarrow (b)$.

Since units always lift modulo the Jacobson radical, we have $U(R/J(R))=\{1\}$. If $R$ is strongly $n$-torsion clean, then   $R/J(R)$ is $m$-torsion clean for some $m\leq n$. The above yields that $m=1$, i.e. $R/J(R)$ is a boolean ring.
\end{proof}

Notice that the ring  $T_m(\F_2)$ of all upper triangular $m\times m$ matrices over the field $\F_2$ is clean, its Jacobson  radical $J$ consists of all strictly upper triangular matrices and $U(T_m(\F_2))=1+J$. Thus, with Proposition~\ref{U(R)} at hand, we deduce:

\begin{ex}\label{tmr}
Let $m\in \N$ and let $k$ be the smallest nonnegative integer such that $m\leq 2^k$. Then the ring $T_m(\F_2)$ is (strongly) $2^k$-torsion clean.
\end{ex}

Recall that a ring $R$ is uniquely clean if every element of $R$ has a unique clean presentation. Such rings were characterized in \cite{NZ} as those abelian rings $R$ such that $R/J(R)$ is boolean (whence $U(R)=1+J(R)$) and idempotents lift modulo $J(R)$. Notice that, as idempotents always lift modulo nil ideals, every ring $R$ such that  $R/J(R)$ is boolean and $J(R)$ is nil must be clean. Therefore, the above proposition also gives the following corollary. Its second statement  generalizes  Example \ref{tmr}.

\begin{cor}\label{uniquely clean}
\begin{enumerate}  \item  Let $R$ be a uniquely clean ring. Then $R$ is $n$-torsion clean if and only if $U(R)$ is of exponent $n$;
\item  Let $R$ be a ring such that $R/J(R)$ is boolean and $J(R)$ is nil of bounded index. Then $R$ is    $n$-torsion clean, where $n$ is the exponent of $U(R)$. Moreover, $n$ is a power of 2.
\end{enumerate}
\end{cor}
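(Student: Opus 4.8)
The plan is to reduce both parts to Proposition~\ref{U(R)}(2): in each case I would verify the hypothesis $U(R)=1+J(R)$, after which the statement becomes a question about the exponent of the group $U(R)$, and from there everything is formal.

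For part (1), I would invoke the description of uniquely clean rings from \cite{NZ}: such a ring $R$ is abelian, $R/J(R)$ is boolean, and idempotents lift modulo $J(R)$. Since $R/J(R)$ is boolean we have $U(R/J(R))=\{1\}$, and as units always lift modulo $J(R)$ this forces $U(R)=1+J(R)$. A uniquely clean ring is in particular clean, so Proposition~\ref{U(R)}(2) applies with no further work and yields: $R$ is $n$-torsion clean $\iff$ $R$ is clean and $U(R)$ has exponent $n$ $\iff$ $U(R)$ has exponent $n$. This is exactly the assertion of (1).

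For part (2), observe first that, as noted in the paragraph preceding the corollary, $R/J(R)$ boolean together with $J(R)$ nil forces $R$ to be clean, and $R/J(R)$ boolean again gives $U(R)=1+J(R)$. By Proposition~\ref{U(R)}(2) it then suffices to show that $U(R)$ has finite exponent and that this exponent is a power of $2$; that exponent is then the claimed $n$. Since $R/J(R)$ is boolean, $2\in J(R)$, so $2$ is nilpotent — fix the least $m$ with $2^m=0$ — and let $k$ be a bound for the nilpotency index of $J(R)$, so $r^k=0$ for every $r\in J(R)$. The only real computation is to check that $(1+r)^{2^s}=1$ for all $r\in J(R)$ once $s$ is large enough depending only on $m$ and $k$: expanding $(1+r)^{2^s}=\sum_{i\ge 0}\binom{2^s}{i}r^i$ inside the commutative subring $\mathbb{Z}[r]$, the terms with $i\ge k$ vanish because $r^k=0$, while for $1\le i\le k-1$ the classical identity $v_2\bigl(\binom{2^s}{i}\bigr)=s-v_2(i)$ shows that $\binom{2^s}{i}$ is divisible by $2^m$ as soon as $s-v_2(i)\ge m$, which holds for all such $i$ once $s\ge m+k$. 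Hence $(1+r)^{2^s}=1$ for this fixed $s$, so $U(R)=1+J(R)$ has exponent dividing $2^s$; being a divisor of a power of $2$, that exponent is itself a power of $2$, and a final application of Proposition~\ref{U(R)}(2) completes the proof.

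The argument is almost entirely bookkeeping; the one step needing care is the $2$-adic estimate for $\binom{2^s}{i}$ in part (2), and what makes it work is precisely that the index of $J(R)$ is \emph{uniformly} bounded — with mere nilness of $J(R)$ one could not pick a single exponent $s$ serving all $r$, and $U(R)$ need not have finite exponent at all.
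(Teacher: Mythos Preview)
Your argument is correct. Part~(1) is exactly the paper's approach: the discussion preceding the corollary already records that uniquely clean rings satisfy $U(R)=1+J(R)$, and then Proposition~\ref{U(R)}(2) finishes.

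For part~(2) you take a different route from the paper. The paper simply observes that such a ring is a $UU$ ring and invokes \cite[Theorem~3.4(2)]{DL} to conclude that $U(R)$ is a $2$-group of bounded exponent, after which Proposition~\ref{U(R)} applies. You instead prove this directly, using the $2$-adic valuation identity $v_2\bigl(\binom{2^s}{i}\bigr)=s-v_2(i)$ to kill the middle binomial terms once $s$ is large relative to the nilpotency bounds on $2$ and on $J(R)$. Your computation is sound (for $1\le i\le k-1$ one has $v_2(i)\le k-1<k$, so $s\ge m+k$ is comfortably enough), and it has the advantage of being self-contained rather than deferring to an external reference; the paper's version is shorter but black-boxes the key fact. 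Your closing remark that the \emph{uniform} bound on the nil index is what makes a single $s$ work is exactly the point.
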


\begin{proof}
(1) being an immediate consequence of the preceding discussion, let the ring $R$ be as in $(2)$. Then $R$ is a $UU$ ring (i.e. all units are unipotent) and so \cite[Theorem 3.4 (2)]{DL} implies that $U(R)$ is a $2$-group. Now the thesis is a simple consequence of Proposition~\ref{U(R)}.
\end{proof}

 Proposition~\ref{U(R)} demonstrates that, from the point of view of $n$-torsion clean property,   rings with $U(R)=1+J(R)$ are, in some sense, not too interesting. The situation when the ring is Jacobson semisimple  and has non-trivial group of units is much more interesting. The next example is of such nature and it shows that a ring can be $n$-torsion clean with $n$ strictly smaller than the exponent of the group $U(R)$.

\begin{ex}\label{nc} Let $R=\mathbb{M}_2(\mathbb{F}_2)$. Then $R$ is $2$-torsion clean and strongly $6$-torsion clean.
\end{ex}

\begin{proof} The ring $R$ is nil clean by virtue of \cite{BCDM}. Since the index of nilpotence of elements of $R$ is at most $2$,  \cite[Corollary 2.11]{D} implies that $R$ invo-clean which is not boolean, so that it is $2$-torsion clean. The above can be also checked by   direct computations. For instance, the unit $r=\left(
                    \begin{array}{cc}
                      1 & 1 \\
                      1 & 0 \\
                    \end{array}
                  \right)
$ of order $3$ has a $2$-torsion clean decomposition $r=\left(
                                            \begin{array}{cc}
                                              1 & 0 \\
                                              0 & 0 \\
                                            \end{array}
                                          \right) + \left(
                                                      \begin{array}{cc}
                                                        0 & 1 \\
                                                        1 & 0 \\
                                                      \end{array}
                                                    \right)
$ (but it does not have strongly $2$-torsion clean decomposition). It is also easy to make elementary computations showing that $R$ is strongly $6$-torsion clean, which we leave to the reader.
\end{proof}

Let us notice that the unit group of $\mathbb{M}_2(\F_2)$ is isomorphic to the symmetric group $\mathcal{S}_3$.

\begin{prop}\label{matrices}  Let $m, k\in \N$ be such that $m\leq 2^k$. Then $\mathbb{M}_m(\mathbb{F}_2)$ is $n$-torsion clean for some natural $n\leq 2^k$.
\end{prop}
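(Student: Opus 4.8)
The plan is to reduce the statement to the already quoted fact that $\mathbb{M}_m(\mathbb{F}_2)$ is a nil clean ring. First I would invoke \cite{BCDM} to write an arbitrary $r\in \mathbb{M}_m(\mathbb{F}_2)$ as a nil clean decomposition $r=f+q$, with $f\in {\rm Id}(\mathbb{M}_m(\mathbb{F}_2))$ and $q$ nilpotent. Since $q$ is a nilpotent $m\times m$ matrix, its characteristic polynomial is $t^m$, so Cayley--Hamilton gives $q^m=0$; as $m\le 2^k$, this yields $q^{2^k}=0$.

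The second step uses characteristic $2$. Because $2f=0$ and $-1=1$, we may rewrite $r=f+q=(1+f)+(1+q)$, where $1+f$ is again an idempotent (indeed $(1+f)^2=1+2f+f^2=1+f$) and $1+q$ is a unit since $q$ is nilpotent. Moreover, $1$ and $q$ commute, so by the iterated Frobenius identity in characteristic $2$ one has $(1+q)^{2^k}=1+q^{2^k}=1$. Hence $r=(1+f)+(1+q)$ is a $2^k$-torsion clean decomposition of $r$. Consequently, every element of $\mathbb{M}_m(\mathbb{F}_2)$ admits a $2^k$-torsion clean decomposition, so the set of those $N\in\N$ for which every element of $\mathbb{M}_m(\mathbb{F}_2)$ has an $N$-torsion clean decomposition is nonempty (it contains $2^k$); by the well-ordering of $\N$ it has a least element $n\le 2^k$, and then $\mathbb{M}_m(\mathbb{F}_2)$ is $n$-torsion clean by Definition~\ref{tors}.

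There is essentially no real obstacle once nil-cleanness of $\mathbb{M}_m(\mathbb{F}_2)$ is taken for granted: the remaining argument is the short characteristic-$2$ manipulation above. The genuine difficulty, were one to aim for a self-contained account, lies precisely in that input, namely that matrix rings over $\mathbb{F}_2$ are nil clean, which is a substantial result quoted here from \cite{BCDM}. One minor point worth flagging is that the minimal parameter $n$ need not divide $2^k$ --- already for $m=2$ the unit group $\mathcal{S}_3$ of $\mathbb{M}_2(\mathbb{F}_2)$ has elements of order $3$ (compare Example~\ref{nc}) --- so only the inequality $n\le 2^k$ is asserted.
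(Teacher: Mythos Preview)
Your proof is correct and follows essentially the same line as the paper's: invoke nil-cleanness of $\mathbb{M}_m(\mathbb{F}_2)$ from \cite{BCDM}, use that a nilpotent $m\times m$ matrix satisfies $q^m=0$ (hence $q^{2^k}=0$), and rewrite $r=f+q$ as $(1+f)+(1+q)$ with $(1+q)^{2^k}=1$ in characteristic~$2$. One minor quibble with your closing commentary: the case $m=2$ does not actually witness $n\nmid 2^k$, since there $n=2$ by Example~\ref{nc}; the presence of units of order $3$ in $U(\mathbb{M}_2(\mathbb{F}_2))\cong\mathcal{S}_3$ is irrelevant to the minimal $n$, which is determined by clean decompositions rather than by the exponent of the unit group.
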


\begin{proof} Set $R=\mathbb{M}_m(\mathbb{F}_2)$. It is known (cf. \cite[Theorem 3]{BCDM}) that $R$ is a nil-clean ring. Thus every element $x$ of $R$ can be written as $x=e+z$ with $e=e^2$ and $z^m=0$, as the index of nilpotence of elements in $R$ is bounded by $m$. Now we can write $x=(1+e)+(1+z)$, where $1+e$ is an idempotent and $(1+z)^{2^k}=1+z^{2^k}=1$, as $m\leq 2^k$. This enables us to conclude that every element of $R$ has $2^k$-torsion clean decomposition. In particular, $R$ is $n$-torsion clean for some $n\leq 2^k$.
\end{proof}

With the help of this proposition, we derive:

\begin{ex}\label{ex mat} The rings $\mathbb{M}_3(\F_2)$ and $\mathbb{M}_4(\F_2)$, in view of Proposition~\ref{matrices}, are $n$-torsion clean for some $n\leq 2^2$. The rings are, however, not invo-clean by virtue of \cite[Corollary 2.11]{D}, and so $n\in \{3,4\}$.
\end{ex}

The linear group $GL(3,\F_2)$ is the unit group of $\mathbb{M}_3(\F_2)$. The group is known to be simple of order $168$ and exponent $84$.

\section{Strongly $n$-torsion clean rings}

The following technical lemma is crucial for our further considerations.

\begin{lem}\label{eq} Suppose that $R$ is a ring and the element $a\in R$ possesses strongly $n$-torsion clean decomposition. Then the equality $(a^n-1)((a-1)^n-1)=0$ holds.
\end{lem}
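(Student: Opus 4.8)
The plan is to exploit that, in a strongly $n$-torsion clean decomposition $a = e + u$ (so $e^2 = e$, $u^n = 1$ and $eu = ue$), the idempotent $e$ commutes with $a$, hence with every polynomial expression in $a$ — in particular with $a-1$, with $a^n$ and with $(a-1)^n$. The strategy is to split the computation using the complementary idempotents $e$ and $1-e$: I will show that $e$ annihilates $(a-1)^n - 1$ on the left, that $1-e$ annihilates $a^n - 1$ on the right, and then combine these two facts.

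First I would treat the ``$e$-part''. Since $u$ and $e$ commute and $e^2 = e$, one gets $(a-1)e = (e + u - 1)e = ue = eu = e(a-1)$, so $a-1$ commutes with $e$. An easy induction then gives $(e(a-1))^k = e(a-1)^k$ for all $k$, while directly $(ue)^k = u^k e$. Taking $k = n$ and using $u^n = 1$ yields $e(a-1)^n = (e(a-1))^n = (ue)^n = u^n e = e$, whence $e\bigl((a-1)^n - 1\bigr) = 0$. Symmetrically, for the ``$(1-e)$-part'' one computes $a(1-e) = (e+u)(1-e) = u(1-e) = (1-e)u = (1-e)a$, so $a$ commutes with $1-e$; then $(a(1-e))^n = a^n(1-e)$ and also $(a(1-e))^n = (u(1-e))^n = u^n(1-e) = 1-e$, giving $a^n(1-e) = 1-e$, i.e. $(a^n-1)(1-e) = 0$. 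Rearranging, $a^n - 1 = (a^n - 1)e$.

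The two facts are then combined: using $a^n - 1 = (a^n-1)e$ followed by $e\bigl((a-1)^n - 1\bigr) = 0$,
$$
(a^n - 1)\bigl((a-1)^n - 1\bigr) = (a^n - 1)\, e\, \bigl((a-1)^n - 1\bigr) = (a^n - 1)\cdot 0 = 0,
$$
which is exactly the asserted identity.

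The argument involves no deep ingredient — this is, after all, a technical lemma — and the only point needing genuine care (the nearest thing to an obstacle) is the non-commutativity of $R$: the idempotent $e$ is not assumed central, so one must consistently track left versus right multiplication and repeatedly invoke the one commutation relation that is available, namely that $e$ (equivalently $1-e$) commutes with $a$, and hence with $u = a - e$. Once that bookkeeping is in place, each intermediate identity is a one-line induction on powers.
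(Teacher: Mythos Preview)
Your proof is correct and follows essentially the same strategy as the paper: both arguments establish the two key identities $(a^n-1)(1-e)=0$ and $e\bigl((a-1)^n-1\bigr)=0$ (equivalently $((a-1)^n-1)e=0$, since $e$ commutes with $a$) and then multiply them together. The only cosmetic difference is that the paper derives these via the binomial expansion of $(e+u)^n$ and of $(a-e)^n$, whereas your direct observations $a(1-e)=u(1-e)$ and $(a-1)e=ue$ reach the same conclusions a bit more cleanly.
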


\begin{proof} Let $a=e+v$ be a strongly $n$-torsion clean decomposition of $a$. Since $e,v$ commute and $v^n=1$, we deduce: $$a^n-1=(e+v)^n-1=\sum _{i=0}^n \binom{n}{i}e^iv^{n-i}-1=\sum _{i=1}^n \binom{n}{i}e^iv^{n-i}\in Re.$$  This implies that
\begin{equation}\label{a}
(a^n-1)(1-e)=0\; \mbox{ and, consequently, }\;a^n-1=(a^n-1)e.
\end{equation}
Using   $ve=(a-1)e$, we also have $1=v^n=(a-e)^n=(a-1)^ne-a^ne+a^n$. This yields
\begin{equation}\label{b}
a^n-1=(a^n-(a-1)^n)e.
\end{equation}

Applying (\ref{b}) and the second equation of (\ref{a}), we get $((a-1)^n-1)e=0$. This equality and the first equation of (\ref{a}) now give together that $(a^n-1)((a-1)^n-1)=(a^n-1)((a-1)^n-1)(e+(1-e))=0$, as desired.
\end{proof}

The following assertion is pivotal.

\begin{lem}\label{reduced} Let $n\in  \mathbb{N}$ and let $R$ be a ring satisfying the identity $(x^n-1)((x-1)^n-1)=0$. Then:
 \begin{enumerate}
   \item  $\mbox{\rm char} (R):=|1\cdot \Z|$ is finite and $J(R)$ is a nil ideal;
   \item  If $n$ is odd, then $R$ is a reduced ring of characteristic $2$ and $J(R)=0$;
   \item If $R$ is an algebra over a field $F$, then either $R$ is   abelian   (i.e. all idempotents of $R$ are central) or $\mbox{\rm char}(F)$ divides $n$.
 \end{enumerate}
\end{lem}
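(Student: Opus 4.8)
The plan is to extract, from the single polynomial identity $(x^n-1)((x-1)^n-1)=0$, concrete numerical relations by substituting well-chosen elements, and then to bootstrap from those to the three structural conclusions.

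For part (1), the first step is to find the characteristic. Substituting $x=2$ (i.e. $x=1+1$) into the identity gives $(2^n-1)((2-1)^n-1)=(2^n-1)\cdot 0=0$, which is useless, so instead substitute $x=-1$: this yields $((-1)^n-1)((-2)^n-1)=0$. When $n$ is even this is again trivial, so more care is needed; the productive substitution is $x=1+r$ for $r$ ranging over $R$, or better, to note that the identity forces $R$ to satisfy a polynomial identity of bounded degree in one variable, hence (by a standard argument, e.g. via the Jacobson–Herstein circle of ideas, or by reducing to the prime/primitive case via a subdirect product decomposition) each element is algebraic of bounded degree over the prime ring. To pin down finiteness of the characteristic, I would plug $x=k\cdot 1$ for small integers $k$ and read off that $1\cdot\Z$ satisfies a nontrivial monic-type relation, forcing $\mathrm{char}(R)$ finite. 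For nilpotence of $J(R)$: if $r\in J(R)$ then $1+r\in U(R)$, and one wants $(1+r)^n=1$, i.e. every unit arising this way is $n$-torsion; applying the identity to $x=1+r$ gives $((1+r)^n-1)(r^n-1)=0$, and since $r^n-1$ is a unit (as $r$ is nilpotent... wait, $r$ need not be nilpotent yet) — more carefully, since $r\in J(R)$, $r^n-1=-(1-r^n)$ is a unit, so we may cancel it and conclude $(1+r)^n=1$; this shows $U(R)\supseteq 1+J(R)$ consists of $n$-torsion elements, and then a standard argument (the exponent of $1+J(R)$ bounding nilpotence, as in the theory of $UU$-rings and Corollary~\ref{uniquely clean}) gives that $J(R)$ is nil. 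The cleanest route is probably: $R/J(R)$ satisfies the same identity, reduce to semiprimitive $R$, and separately handle $J(R)$ via the $1+J(R)$ observation together with a bounded-index estimate coming from the degree-$2n$ identity.

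For part (2), assume $n$ odd. The key computational step is the substitution $x=e$ for $e$ idempotent, and more importantly $x=1$: trivial; so substitute $x=2=1+1$, getting nothing, then substitute $x=-1$: since $n$ is odd, $(-1)^n-1=-2$ and $(-2)^n-1=-2^n-1=-(2^n+1)$, so the identity yields $(-2)(-(2^n+1))=0$, i.e. $2(2^n+1)=0$, hence $2^{n+1}+2=0$, i.e. $2^{n+1}=-2$; combined with a second substitution (say $x=3=1+2$, giving $(3^n-1)(2^n-1)=0$) one extracts that $2$ is a zero-divisor of a very controlled sort, and pushing this in the prime case forces $\mathrm{char}(R)=2$. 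Once $\mathrm{char}(R)=2$, the identity becomes $(x^n+1)((x+1)^n+1)=0$, and $(x+1)^n=x^n+\binom{n}{1}x^{n-1}+\cdots+1$; to show $R$ is reduced, suppose $r^2=0$ and substitute $x=r$: then $r^n=0$ (as $n\ge2$... but $n$ could be $1$, in which case $R$ is boolean and we are done), so $(r^n+1)=1$ is a unit, forcing $(r+1)^n+1=0$, i.e. $(r+1)^n=1$ (char $2$), but $(r+1)^n=1+nr$ since $r^2=0$, so $nr=0$; as $n$ is odd it is invertible mod $2$, hence $r=0$. This shows $R$ has no nonzero square-zero elements, so $R$ is reduced, and then $J(R)=0$ follows since $J(R)$ is nil by part (1) and a reduced ring has no nonzero nilpotents.

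For part (3), assume $R$ is an $F$-algebra and $\mathrm{char}(F)$ does not divide $n$; the goal is to show every idempotent $e$ is central. Fix an idempotent $e$ and an arbitrary $r\in R$; set $t=er(1-e)$, which satisfies $t^2=0$ and $et=t$, $te=0$. The natural element to feed into the identity is $x=e+t$ (which is idempotent, so gives nothing) or $x=1+t$, or — the right choice — $x=e+\lambda t$ for a scalar parameter $\lambda\in F$, or simply $x$ a well-chosen unit. Consider $u=1+t$: it is a unit with $u^2=1+2t$, and $u^n=1+nt$ since $t^2=0$. Plugging $x=u=1+t$ into the identity: $(u^n-1)((u-1)^n-1)=(nt)(t^n-1)$; since $t^2=0$ and $n\ge 2$ we have $t^n=0$, so this is $(nt)(-1)=-nt$, giving $nt=0$; as $\mathrm{char}(F)\nmid n$, $n$ is invertible in $F$, hence $t=0$, i.e. $er(1-e)=0$ for all $r$. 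Symmetrically, using $t'=(1-e)re$ and the same computation, $(1-e)re=0$ for all $r$. Together these give $er=ere=re$, so $e$ is central; thus $R$ is abelian. The main obstacle I anticipate is part (2)'s first half — cleanly forcing characteristic $2$ from the identity in the odd case — since one genuinely has to rule out, via a reduction to prime rings or a careful manipulation of the integer relations obtained by substituting $0,1,-1,2,\ldots$, that $R$ could have mixed or odd-prime characteristic; the rest of the argument is a sequence of short substitution-and-cancel steps of the kind just sketched.
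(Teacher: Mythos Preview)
Your proposal is largely sound, and in places takes a route different from (and arguably more direct than) the paper's, but the step you yourself flag as the main obstacle is indeed the one real gap.

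For part (2), forcing $\mathrm{char}(R)=2$: you try $x=-1$ and $x=3$, obtaining $2(2^n+1)=0$ and $(3^n-1)(2^n-1)=0$, and propose to ``push this in the prime case''. As stated this does not close: for instance with $n=5$ both integer relations hold modulo $11$ (since $2^5\equiv -1$ and $3^5\equiv 1$), yet $\Z/11\Z$ does \emph{not} satisfy the full identity (check $x=7$). The paper's fix is a single substitution you overlooked: set $x=0$. Then $\phi(0)=(0^n-1)\bigl((-1)^n-1\bigr)=(-1)(-2)=2$ since $n$ is odd, so $2=0$ in $R$ immediately. Once $\mathrm{char}(R)=2$, your argument that $r^2=0$ forces $r=0$ (via $(1+r)^n=1+nr$ with $n$ odd, hence invertible mod $2$) is correct and agrees with the paper's.

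For part (1), you know to plug in an integer but never settle on one; the paper uses $x=3$ (which you yourself invoke later), since $\phi(3)=(3^n-1)(2^n-1)>0$ for all $n\ge 1$. Your treatment of $J(R)$ being nil is genuinely different from the paper's: you substitute $x=1+r$ for $r\in J(R)$, note that $r^n-1=-(1-r^n)$ is a unit, and cancel to get $(1+r)^n=1$; combined with finite characteristic this makes the subring $(1\cdot\Z)[r]$ finite, whence $r$ is nilpotent. The paper instead localizes at a prime ideal disjoint from $\{r^k:k\ge 1\}$ and argues via algebraicity over a finite field. Your route is valid and more elementary once the characteristic is pinned down; indeed the paper uses essentially your computation later, in the proof of Theorem~\ref{pi}, to bound the nil index.

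For part (3), your substitution $x=1+t$ with $t=er(1-e)$ is a clean variant of the paper's $x=t$; it has the minor advantage of handling odd and even $n$ uniformly (the paper disposes of odd $n$ separately by invoking part (2)), and the conclusion $nt=0$, hence $t=0$ when $\mathrm{char}(F)\nmid n$, is correct.
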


\begin{proof} Set $\phi(x)=(x^n-1)((x-1)^n-1)\in \Z[x]$.

(1) Substituting $x=3\cdot 1$ in the identity $\phi(x)=0$, we see that there exists $0\ne m\in \Z$ such that $1\cdot m=0$ in $R$. This shows that characteristic $\mbox{\rm char}(R)$ of $R$ is finite.

Let  $r\in J(R)$. Assume that $r$ is not nilpotent. Then the multiplicatively closed set $S=\{r^k\mid k\in \N\}$ does not contain $0$. Let $P$ be a maximal ideal of $R$ in the class of all ideals having empty intersection with $S$. So, $P$ is a prime ideal of $R$, the ring $\bar R=R/P$ satisfies the same identity as $R$ does and $\bar r=r+P\in J(\bar R)$. Moreover $\bar r$ is not nilpotent, as $S\cap P=\emptyset$. Thus, eventually replacing $R$ by $\bar R$, we may additionally assume that the ring $R$ is  prime. Then, its subring $1\cdot \Z=F$ of $R$ is a domain. By the first part of the proof, $1\cdot \Z$ is finite, so it is a field. This means that the element $r\in J(R)$ is algebraic over the field $F$ and, as such, has to be nilpotent (cf. \cite[Proposition 4.18]{L}). This contradicts the choice of $r$ and shows that every element of $J(R)$ is nilpotent.

(2)  Suppose $n$ is odd.  Then, substituting $x=0$ in the identity $\phi(x)=0$, we obtain $2=0$, i.e. $\mbox{\rm char}(R)=2$.

Let $r\in R$ be such that $r^2=0$. If $n=1$, then the identity $\phi(x)=0$ shows that $(r-1)r=0$ and $r=0$ follows immediately, as $r-1$ is invertible.

Suppose now that $n\geq 3$. Notice that  $(r-1)^2=1$. Thus, as  $n$ is odd, $(r-1)^n=(r-1)$. Therefore, $r=\phi(r)=0$. This shows that $R$ has no nonzero nilpotent elements, i.e. $R$ is reduced. Then also $J(R)=0$ as, by (1), $J(R)$ is a nil ideal.

(3) Suppose $R$ is an algebra over a field $F$. By (1), $\mbox{\rm char}(F)=p\ne 0$.  If  $n$ is odd then, using (2), $R$ is a reduced ring, so it is abelian. Suppose now that $n$ is even and $p$ does not divide $n$.  Thus   $1\cdot n$  is invertible in $R$. Let $e=e^2, r\in R$. Substituting $x:=er(1-e)$ in the identity $\phi(x)=0$ and using the fact that $n$ is even, we obtain $0=((er(1-e))^n-1)((er(1-e)-1)^n-1)=ner(1-e) $ and thus the equality $er(1-e)=0$ follows. Similarly $(1-e)re=0$. The above shows that every idempotent $e$ of $R$ is central, provided that $\mbox{\rm char}(R) $ does not divide $n$. This completes the proof of the lemma.
\end{proof}

\begin{rem}\label{rem nt} In regard to point (2) stated above, a routine argument demonstrates that when $R$ is an $n$-torsion clean ring and $n$ is odd, then $J(R)=0$ and $\mbox{\rm char}(R)=2$. Indeed, let $0=f+v$ be an $n$-torsion clean decomposition of $0$. Then $-f=(-f)^n=v^n=1$ and  $\mbox{\rm char}(R)=2$ follows.  Now, Proposition~\ref{U(R)}(1) yields that $1=(r-1)^n=\sum_{i=0}^n\binom{n}{i}(-1)^{n-i}r^i $, for any $r\in J(R)$. As $n$ is odd, this equation gives $0=rw$, where $w= \sum_{i=1}^n\binom{n}{i}(-1)^{n-i}r^{i-1}\in 1 +J(R)$ is invertible in $R$, i.e. $r=0$, as required.
\end{rem}

Now we are ready to establish the following theorem.

\begin{thm}\label{pi} Let $n\in \N$. Suppose $R$ is a strongly $n$-torsion clean ring. Then:
 \begin{enumerate}
   \item  $R$ is a PI-ring satisfying the polynomial identity $(x^n-1)((x-1)^n-1)=0  $;
   \item $R$ has finite characteristic   $\mbox{\rm char} (R)=|1\cdot \Z|$;
   \item  $J(R)$ is a nil ideal of   index   smaller than   $(\mbox{\rm char} (R))^n$;
   \item When $n$ is odd, then $R$ is a reduced ring of characteristic 2 and $J(R)=0$;
   \item If $R$ is an algebra over a field $F$, then:\\
   (i) $J(R)$ is a nil ideal of index bounded by $n$;\\
   (ii) either $R$ is abelian  (i.e. all idempotents of $R$ are central) or $\mbox{\rm char} (F)$ divides $n$.
 \end{enumerate}

\end{thm}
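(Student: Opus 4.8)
The approach is to assemble Lemma~\ref{eq} and Lemma~\ref{reduced}, using only the fact that every element of a strongly $n$-torsion clean ring has, by definition, a strongly $n$-torsion clean decomposition. For (1), Lemma~\ref{eq} applied to each $a\in R$ gives $(a^{n}-1)((a-1)^{n}-1)=0$; since $\phi(x):=(x^{n}-1)((x-1)^{n}-1)$ is monic of degree $2n$, this is a genuine polynomial identity and $R$ is a PI-ring. Given (1), statement (2) is Lemma~\ref{reduced}(1) and statement (4) is Lemma~\ref{reduced}(2).

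For (3), write $c=\mathrm{char}(R)$ (finite by (2)) and take $r\in J(R)$. Substituting $x=1+r$ in $\phi(x)=0$ and noting that $(x-1)^{n}-1=r^{n}-1$ is a unit (as $r^{n}$ is nilpotent) forces $(1+r)^{n}=1$. Hence the commutative subring $\mathbb{Z}[r]=\mathbb{Z}[1+r]$ is a homomorphic image of $(\mathbb{Z}/c\mathbb{Z})[y]/(y^{n}-1)$, which is $\mathbb{Z}/c\mathbb{Z}$-spanned by $1,y,\dots,y^{n-1}$ and so has at most $c^{n}$ elements; thus $|\mathbb{Z}[r]|\le c^{n}$. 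If $m$ is the nilpotence index of $r$, then in the chain $\mathbb{Z}[r]\supseteq r\mathbb{Z}[r]\supseteq\cdots\supseteq r^{m-1}\mathbb{Z}[r]\supseteq r^{m}\mathbb{Z}[r]=0$ every inclusion is strict, because an equality $r^{j}\mathbb{Z}[r]=r^{j+1}\mathbb{Z}[r]$ with $0\le j\le m-1$ would, on iterating multiplication by $r$, give $r^{j}\mathbb{Z}[r]=r^{m}\mathbb{Z}[r]=0$ and contradict $r^{j}\ne 0$. Therefore $\mathbb{Z}[r]$ contains $m+1$ distinct additive subgroups forming a chain, so $m+1\le|\mathbb{Z}[r]|\le c^{n}$ and $m<c^{n}$; as $r$ was arbitrary, $J(R)$ is nil of index smaller than $c^{n}$.

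For (5), let $R$ be an algebra over a field $F$; by (2), $\mathrm{char}(F)=p\ne 0$, and then (ii) is Lemma~\ref{reduced}(3). For (i), take $r\in J(R)$, so $(1+r)^{n}=1$ as above, and write $n=p^{a}m$ with $p\nmid m$. Since $\mathrm{char}(R)=p$ and $1,r$ commute, $(1+r)^{p^{a}}=1+r^{p^{a}}$, hence $(1+r^{p^{a}})^{m}=1$. Putting $s=r^{p^{a}}$ and expanding gives $sw=0$, where $w=m+\binom{m}{2}s+\cdots+s^{m-1}$; as $w$ is the unit $m$ of $\mathbb{F}_{p}$ plus a nilpotent element of the commutative ring $\mathbb{F}_{p}[s]$, it is invertible, so $s=r^{p^{a}}=0$. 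Thus every element of $J(R)$ is nilpotent of index at most $p^{a}\le n$, which is (i).

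The main obstacle is part (3): the crux is to notice that $(1+r)^{n}=1$ makes $\mathbb{Z}[r]$ a finite ring of order at most $(\mathrm{char}\,R)^{n}$, and then to convert this cardinality bound into a bound on the nilpotence index via a strictly descending chain of principal ideals. The Frobenius computation in (5)(i), improving $(\mathrm{char}\,R)^{n}$ to $n$ in the presence of a base field, is the only other point requiring a small idea; the remaining parts are direct applications of Lemmas~\ref{eq} and~\ref{reduced}.
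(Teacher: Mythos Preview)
Your proof is correct and follows the same overall plan as the paper: invoke Lemma~\ref{eq} for (1), read off (2), (4) and (5)(ii) from Lemma~\ref{reduced}, and then do separate work for the nilpotence bounds in (3) and (5)(i). The details differ in two places that are worth noting.

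First, to obtain $(1+r)^n=1$ for $r\in J(R)$, the paper appeals to Proposition~\ref{U(R)}(1): the unit $1+r$ has the unique clean decomposition $0+(1+r)$, and since $R$ is $n$-torsion clean this forces $(1+r)^n=1$. You instead substitute $x=1+r$ into the identity $\phi(x)=0$ and cancel the unit factor $r^n-1$. Both are short; yours stays entirely inside the PI framework, while the paper's argument actually uses only the weaker hypothesis that $R$ is $n$-torsion clean (not strongly), which is occasionally useful elsewhere.

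Second, for (5)(i) the paper observes that $S=\mathbb{F}_p[r]$ has $\mathbb{F}_p$-dimension at most $n$ and runs the chain $S\supsetneq Sr\supsetneq Sr^2\supsetneq\cdots$ to get $r^n=0$. Your Frobenius argument, writing $n=p^a m$ with $p\nmid m$ and deducing $r^{p^a}=0$, is genuinely different and in fact sharper: it shows the nil index of $J(R)$ is bounded by the $p$-part of $n$, not just by $n$. For (3) your chain-of-ideals count and the paper's count of distinct powers of $r$ are two phrasings of the same finiteness argument.
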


\begin{proof} The first statement is a direct consequence of Lemma~\ref{eq}. Notice that, in virtue of Lemma~\ref{reduced}, for completing the proof  it remains  only to show that $J(R)$ is nil of index bounded as indicated in the theorem.

  Let $r\in J(R)$. We claim that $r^{( {\rm char}(R))^n}=0$. By Lemma~\ref{reduced}(1), $r$ is nilpotent. Furthermore, Proposition~\ref{U(R)} (1) shows that the unit $1+r$ has exactly one clean presentation. Thus $(1+r)^n=1$ follows, as $R$ is $n$-torsion clean. Therefore $r^n\in S=(1\cdot \Z)[r]=(1\cdot \Z)r^{n-1} +\ldots+(1\cdot \Z)$. By (1), $ 1\cdot \Z$ is a finite ring with $c:=\mbox{\rm char}(R)$ elements. Hence the ring $S$ is finite and has at most $c^n $ elements. As $r\in S$ is nilpotent, its index of nilpotence has to be smaller than $|S|\leq c^n$ (to argue this, just consider the set $A\subseteq S$ of all powers of the element $r$ and show that $|A|$ is the nilpotence index of $r$). This gives (3).

  Suppose now that $R$ is an algebra over a field $F$. Then $S$ defined as above is, in this case, a finite dimensional algebra over $\F_p=1\cdot \Z\subseteq F$ of dimension not bigger than $n$. The dimension argument applied to the sequence of subspaces $S\supseteq Sr\supseteq Sr^2\supseteq \ldots$ shows that $r^n=0$, when $r\in S $ is  nilpotent. This yields  (5)(i) and completes the proof of the theorem.
\end{proof}

It is an important open question (see \cite[Question 2]{Ni}) whether strongly clean rings are Dedekind finite. Since PI rings are Dedekind finite, the above theorem gives immediately the following corollary:
\begin{cor}
 Strongly $n$-torsion clean rings are Dedekind finite.
\end{cor}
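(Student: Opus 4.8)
The plan is to reduce the statement to Theorem~\ref{pi}(1), which does all the real work: a strongly $n$-torsion clean ring $R$ satisfies the polynomial identity $(x^n-1)((x-1)^n-1)=0$, so $R$ is a PI-ring, say of degree $2n$. It then only remains to invoke the classical fact that every PI-ring is Dedekind finite. Thus the corollary is essentially immediate from the theorem, and the only point worth recording is the justification of that classical fact, which I would include for completeness.

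For the latter, I would argue by contradiction. Suppose $R$ is not Dedekind finite, so there are $a,b\in R$ with $ab=1$ but $ba\ne 1$. Set $p=1-ba$; from $ab=1$ one checks that $p$ is a nonzero idempotent with $ap=0=pb$. Define $e_{ij}=b^ipa^j$ for $i,j\geq 0$. A routine computation using $ab=1$ (hence $a^ib^i=1$, $ap=0$, $pb=0$, $p^2=p$) yields the matrix-unit relations $e_{ij}e_{kl}=\delta_{jk}e_{il}$, and each $e_{ij}$ is nonzero since $a^ie_{ij}b^j=p\ne 0$. Consequently, for every $k\in\N$ the subring of $R$ generated by $\{e_{ij}\mid 0\leq i,j<k\}$ is a nonzero (possibly non-unital) homomorphic image of $\mathbb{M}_k(\Z)$; since the kernel is an ideal of $\mathbb{M}_k(\Z)$, this subring is isomorphic to $\mathbb{M}_k(\Z/m\Z)$ for some $m\ne 1$ (one may also use Theorem~\ref{pi}(2) to see that $m$ divides $\mbox{\rm char}(R)$, so $m>1$).

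Now choosing a maximal ideal of $\Z/m\Z$ and passing to the residue field $\F_q$, the Amitsur--Levitzki theorem shows that $\mathbb{M}_k(\F_q)$, and therefore its preimage $\mathbb{M}_k(\Z/m\Z)$, and therefore the above subring of $R$, satisfies no polynomial identity of degree strictly less than $2k$. Taking $k>n$ then contradicts the fact that $R$ satisfies an identity of degree $2n$, so $R$ must be Dedekind finite. There is no genuine obstacle here: the single delicate point is that the matrix units built from $a,b$ a priori generate only $\mathbb{M}_k$ over the commutative ring $\Z/m\Z$ rather than over a field, which is precisely why one reduces modulo a maximal ideal before applying Amitsur--Levitzki; everything else is bookkeeping. (Alternatively, one may simply quote the standard theorem that PI-rings are Dedekind finite and skip the verification altogether.)
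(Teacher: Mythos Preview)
Your proposal is correct and follows exactly the same route as the paper: invoke Theorem~\ref{pi}(1) to see that a strongly $n$-torsion clean ring is PI, then use the classical fact that PI-rings are Dedekind finite. The paper simply quotes this fact without proof, whereas you supply the standard matrix-unit argument; the extra justification is sound but not needed for the paper's purposes.
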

\begin{cor}
 Let $R$ be a strongly $n$-torsion clean ring. If $R$ is a finitely generated algebra over a central noetherian subring, then $J(R)$ is nilpotent.
\end{cor}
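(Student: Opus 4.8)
The plan is to derive the statement from Braun's theorem on the nilpotency of the Jacobson radical of an affine PI-algebra over a commutative noetherian ring, so the work reduces to checking that the hypotheses of that theorem hold here. First I would record that $R$ is a PI-ring in the strong sense. By Theorem~\ref{pi}(1), $R$ satisfies the one-variable identity $\phi(x)=(x^n-1)((x-1)^n-1)=0$, whose top-degree monomial $x^{2n}$ occurs with coefficient $1$; a standard full linearization of this monic identity over $\Z$ (inclusion--exclusion kills the lower-degree terms) shows that $R$ satisfies the admissible multilinear identity $\sum_{\sigma\in\mathcal{S}_{2n}}x_{\sigma(1)}\cdots x_{\sigma(2n)}=0$. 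Since $\phi$ has integer coefficients, $R$ is a PI-algebra over every subring of its center; in particular, if $C$ is a central noetherian subring over which $R$ is finitely generated as an algebra, then $R$ is an affine PI-algebra over the commutative noetherian ring $C\subseteq Z(R)$, with $1_C=1_R$.

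Having arranged this, Braun's theorem applies verbatim: the Jacobson radical of a finitely generated PI-algebra over a commutative noetherian ring contained in its center is nilpotent. Hence $J(R)$ is nilpotent, which is exactly the assertion of the corollary. (One could equally cite the result in its Razmyslov--Kemer--Braun form; what matters is the version for coefficient rings that are commutative noetherian, rather than only fields.)

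As to where the content lies: by Theorem~\ref{pi}(3) we already know that $J(R)$ is nil of bounded index, and if $R$ were moreover module-finite over $C$, then $R$ would be a (left and right) noetherian ring and the nilpotency of $J(R)$ would follow at once from Levitzki's theorem that a finitely generated nil one-sided ideal is nilpotent. The point of the corollary is precisely that \emph{algebra}-finiteness suffices; here there is no elementary substitute, because affine PI-algebras over noetherian rings need not themselves be noetherian. Accordingly, the only real obstacle is to quote the strong form of Braun's theorem correctly and to verify that ``$R$ is a PI-ring'' holds in the exact sense it requires (i.e. an admissible, multihomogeneous identity), both of which are handled in the first paragraph; the remainder is a direct invocation.
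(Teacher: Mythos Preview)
Your proposal is correct and follows essentially the same route as the paper: invoke Theorem~\ref{pi}(1) to obtain a monic polynomial identity, then apply Braun's theorem \cite[Theorem 2.5]{B} to conclude that $J(R)$ is nilpotent. The paper's proof is simply more terse---it records in one line that $R$ satisfies a monic PI and that $J(R)$ is nil, then cites Braun---whereas you spell out the linearization step and add context; but the mathematical content is the same.
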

\begin{proof}
 By Theorem \ref{pi}(1), $R$ satisfies a monic polynomial identity and $J(R)$ is a nil ideal. Now the thesis is a direct consequence of \cite[Theorem 2.5]{B}.
\end{proof}
The following example shows that, in general, Jacobson radical of strongly $n$-torsion clean rings does not have to be nilpotent.
\begin{ex}\label{jacobson exm}
 Let $F=\F_{p^k}$ and $F[X]$  be   the polynomial ring in  infinitely many commuting indeterminates from the set $X$.   Set $R=F[X]/I$, where $I$ is the ideal of $F[X]$ generated by all elements  $x^p$, $x\in X$. Then $R$ is a local ring and its Jacobson radical is not nilpotent.  Making use of Propositions \ref{U(R)}(1) and \ref{fields}(1), one can  easily    check that $R$ is $p(p^k-1)$-torsion clean.
\end{ex}

In Theorem~\ref{comm} stated in the sequel we will present a complete characterization of strongly $n$-torsion clean rings in the case when $n$ is odd. For doing so, the following proposition, which gives a characterization of strongly $n$-torsion clean rings which are subdirect products of fields, is needed.

\begin{prop}\label{fields}
\begin{enumerate}
\item  Let $F$ be a field. Then $F$ is $n$-torsion  clean if and only if $F$ is   finite and $n=|F|-1$.
\item  A product of fields $\mathbb{F}_{p_1^{k_1}}\times \ldots \times\mathbb{F}_{p_t^{k_t}}$ is $n$-torsion clean, where $n= LCM(p_1^{k_1}-1,\ldots , p_t^{k_t}-1)$;
\item  A product $ \prod_{i\in I}F_i$ of fields is $n$-torsion clean if and only if all fields $F_i$, $i\in I$, are finite, $LCM(|F_i|-1\mid i\in I)$ exists and is equal to $n$;
\item Let $R$ be a subdirect product of fields $F_i$, $i\in I$. Then $R$ is  $n$-torsion clean if and only if $\prod_{i\in I}F_i$ is $n$-torsion clean.
\end{enumerate}
\end{prop}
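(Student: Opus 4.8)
The plan is to handle the four parts in order, with each one building on its predecessor. For part (1), if $F$ is $n$-torsion clean, then in particular every $r \in F$ has a decomposition $r = e + u$ with $e \in \{0,1\}$ (a field has no nontrivial idempotents) and $u^n = 1$; hence every element of $F$ is of the form $u$ or $1 + u$ with $u^n = 1$, so the set $\{u \in F : u^n = 1\}$ has at least $|F|/2$ elements. Since $x^n - 1$ has at most $n$ roots, $F$ must be finite. Conversely, if $F$ is finite, then $U(F)$ is cyclic of order $|F|-1$, every nonzero element is a unit with $u^{|F|-1} = 1$, and $0 = 1 + (-1)$ with $(-1)^{|F|-1} = 1$ (automatic if $|F|$ is even, and true since $-1$ has order dividing $2 \mid |F|-1$ when $|F|$ is odd); so $F$ is clean with all units $n$-torsion for $n = |F|-1$. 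Minimality of $n$ follows from Lemma~\ref{descrip}(2) (or directly: $U(F)$ cyclic of order $|F|-1$ contains an element of that order, which cannot be $m$-torsion for $m < |F|-1$).

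For part (2), a finite product of fields is clean (idempotents are the tuples of $0$'s and $1$'s, and each coordinate field is clean), and for a unit $u = (u_1,\dots,u_t)$ one has $u^N = 1$ where $N = LCM(p_1^{k_1}-1,\dots,p_t^{k_t}-1)$, since each $u_i^{p_i^{k_i}-1} = 1$. So the product is $n$-torsion clean for some $n \mid N$. That $n = N$ follows because $U$ of the product is the direct product of the cyclic groups $U(\mathbb{F}_{p_i^{k_i}})$, hence has exponent exactly $N$, and Lemma~\ref{descrip}(1) gives $n \mid N$ while the exponent being $N$ forces $n = N$ (an element of order $N$ appears as the product of generators, and it must be $n$-torsion). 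Part (3) extends this to arbitrary products: if $\prod_{i \in I} F_i$ is $n$-torsion clean, then each homomorphic image $F_i$ is $m_i$-torsion clean for some $m_i \le n$, so by part (1) each $F_i$ is finite with $|F_i| - 1 = m_i \mid n$ (divisibility because $U(\prod F_i) = \prod U(F_i)$ has exponent dividing $n$ by Proposition~\ref{U(R)}-type reasoning, or directly from the decomposition data); hence $LCM(|F_i|-1 : i \in I)$ exists and divides $n$. Conversely if that $LCM$ exists and equals $n$, the product is clean (again coordinatewise) and every unit satisfies $u^n = 1$ coordinatewise, so the product is $m$-torsion clean for some $m \mid n$; and since for each $i$ the coordinate subgroup contributes an element of order $|F_i| - 1$, the exponent of $U(\prod F_i)$ is exactly $n$, forcing $m = n$.

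Part (4) is where the real content lies, and I expect the containment $R \hookrightarrow \prod_{i \in I} F_i$ of a subdirect product to be the main obstacle, since a priori $R$ could be $n$-torsion clean with a \emph{smaller} $n$ than the full product, or fail to be clean at all. The key observations: first, $R$ is commutative and reduced (a subring of a product of fields), with $J(R) = 0$; second, clean $\Rightarrow$ every idempotent-complement behaves well, but more usefully, since $R$ is commutative, Lemma~\ref{descrip}(2) says that if $R$ is $n$-torsion clean then $U(R)$ contains an element of order $n$, and $U(R) \le \prod U(F_i)$; third, the surjectivity onto each $F_i$ means $U(R)$ surjects onto each $U(F_i)$, so if all $F_i$ are finite with $LCM$ of the $|F_i|-1$ equal to some $N$, then $N \mid \exp(U(R))$, and combined with $\exp(U(R)) \mid N$ (each $u \in U(R)$ has $u^N = 1$ coordinatewise) we get $\exp(U(R)) = N$. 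The remaining point is that $R$ is itself clean: because $R$ is commutative, reduced, with $U(R) = U(\prod F_i) \cap R$ and idempotents of $R$ sitting inside those of $\prod F_i$; one shows clean directly by, given $r \in R$, producing $e \in \mathrm{Id}(R)$ and a unit $r - e \in R$ — the natural choice is $e$ = the idempotent that is $1$ in coordinates where $r_i = 0$ and $0$ elsewhere, but one must check $e \in R$, which is exactly where the subdirect-product structure and the identity $\phi(x) = 0$ (satisfied by $R$ via Lemma~\ref{eq}-type reasoning, forcing enough idempotents to exist, e.g.\ via the fact that $R$ is a commutative reduced ring in which $x^{N+1} = x$ for a suitable $N$, hence von Neumann regular) come in. Once $R$ is known to be clean and von Neumann regular with $\exp(U(R)) = N$, the argument of part (3) applied internally gives that $R$ is $N$-torsion clean, i.e.\ $n = N$, which equals the value for $\prod_{i \in I} F_i$; and the converse direction (if $R$ is $n$-torsion clean then so is the product) follows because $n$-torsion clean forces $U(R)$ to have exponent dividing $n$ with an element of order $n$, pushing the same constraints onto each $F_i$ and hence onto $\prod F_i$ by part (3).
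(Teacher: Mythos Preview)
Your overall architecture is right, and part~(1) is essentially the paper's argument (root-counting to get $|F|\le 2n$, combined with $n\mid |F|-1$ from Lemma~\ref{descrip} to force $n=|F|-1$). But there is a genuine recurring gap in parts~(2)--(4): you repeatedly assert that if a commutative ring $T$ is $m$-torsion clean then $\exp(U(T))\mid m$, or equivalently that a unit $u$ of order $N$ ``must be $m$-torsion.'' This does not follow from the definition. Being $m$-torsion clean only guarantees a decomposition $u=e+v$ with $v^m=1$; the idempotent $e$ need not be $0$, so there is no reason for $u^m=1$. Your parenthetical in~(2), ``an element of order $N$ appears as the product of generators, and it must be $n$-torsion,'' is exactly this unjustified step, and the analogous assertions in~(3) and in the converse direction of~(4) inherit the same problem. (In part~(1) your alternative ``direct'' argument has the same flaw: a generator of $U(F)$ might still admit a decomposition $1+v$ with $o(v)<|F|-1$.)

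The paper closes this gap not by bounding $\exp(U(T))$ directly, but by projecting to a single field and reusing part~(1) to extract \emph{divisibility} rather than mere inequality. Concretely: if $T$ is $m$-torsion clean and $\pi_i\colon T\twoheadrightarrow F_i$, then every $a\in F_i$ inherits a decomposition $a=e+u$ with $u^m=1$; since also $u^{n_i}=1$ (where $n_i=|F_i|-1$), one has $u^{\gcd(m,n_i)}=1$, so $F_i$ admits $\gcd(m,n_i)$-torsion clean decompositions; now part~(1) forces $n_i\le \gcd(m,n_i)$, i.e.\ $n_i\mid m$. Taking the LCM over $i$ gives $N\mid m$, which is precisely the missing inequality. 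Your treatment of cleanness in~(4) via the identity $x^{N+1}=x$ is fine and matches the paper; what is missing is this divisibility step, without which you cannot conclude that the torsion-clean degree of the subdirect product equals that of the full product.
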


\begin{proof}(1) Notice that any finite field $F$ is $n$-torsion clean for some divisor $n$ of $|F|-1$. On the other hand, if $F$ is any field which is $n$-torsion clean then, by Theorem \ref{pi}, every element of $F$ is a root of the polynomial $(x^n-1)((x-1)^n-1)\in F[x]$, so $|F|$ is finite and $|F|\leq 2n$. Suppose that $F$ is a finite $n$-torsion clean field and let $|F|-1=l\cdot n$. By what we have just shown it follows that $l=\frac{|F|-1}{n}\leq 2-\frac{1}{n}<2$ and so $l=1$ holds, i.e. $s=|F|-1$, as required.

(2) Let $T=\mathbb{F}_{p_1^{k_1}}\times \ldots \times\mathbb{F}_{p_t^{k_t}}$ and let $n$ be as defined in (2). Notice that $n=\max\{o(u)\mid u\in U(T)\}$ and the order of any $u\in U(T)$ divides $n$. Therefore, $T$ is $m$-torsion clean for some $m\leq n$.

For showing that $n=m$, it suffices to show that $n_i=p_i^{k_i}-1$ divides $n$, for any $1\leq i\leq t$. Note that, by (1), $F_i=\F_{p_i^{k_i}}$ is $n_i$-torsion clean. Furthermore, using Lemma~\ref{descrip}, we can pick elements $r_1, \ldots ,r_s\in T$ and their clean decompositions $r_j=e_j+u_j$, $1\leq j\leq s$, such that $m=LCM(u_1,\ldots, u_s)$. For a fixed $1\leq i\leq t$ consider the set $\{\pi_i(r_1), \ldots, \pi_i(r_s)\}\subseteq F_i$, where $\pi_i$ denotes the canonical projection of $R$ onto $F_i$. Then, for every $a\in F_i$, $a$ can be presented as $e+u$ with $u^{z_i}=1$, where $z_i=LCM(o(\pi_i(u_1)), \ldots, o(\pi_i(u_s)))$. Thus $n_i\leq z_i$. As $z_i$ is a $LCM$ of orders of elements in a cyclic group $U(F_i)$ of order $n_i$, we also deduce that $z_i\leq n_i$, i.e. $z_i=n_i$. This implies that $n_i$ divides $m$, for any $1\leq i\leq t$, as desired.

(3) Suppose the product $ \prod_{i\in I}F_i$ is $n$-torsion clean. Then every field $F_i$ is a homomorphic image of
$\prod_{i\in I}F_i$. Thus, owing to (1), each $F_i$ is a finite field. If $LCM(|F_i|-1\mid i\in I)$ would not exist, then there would exist indexes $i_1,\ldots, i_k\in I$ such that $m=LCM(|F_{i_1}|-1, \ldots, |F_{i_k}|-1 )>n$. However, in virtue of (2), $T=F_{i_1}\times \dots\times  F_{i_k} $ is $m$-torsion clean and $m\leq n$, as $T$ is a homomorphic image of $R$. Thus $LCM(|F_i|-1\mid i\in I)$ do exist and we can assume that $LCM(|F_i|-1\mid i\in I)=LCM(|F_{i_1}|-1, \ldots, |F_{i_k}|-1 )=m$. Then it is clear that $n\leq m$. Notice also that $m\leq n$, as $T$ is a homomorphic image of $R$, i.e. $n=m$. This gives (3).

(4) Let $R$ be a subdirect product of fields $F_i$, $i\in I$. Suppose $R$ is $m$-torsion clean. For every $i\in I$,   $F_i$ is a homomorphic image of $R$ so, with the aid of (1), the field $F_i$ is $n_i$-torsion clean, where $n_i=|F_i|-1$.  We also have $n_i\leq m$. Therefore, $ LCM(|F_i|-1\mid i\in I)$ exists and the statement (3) shows that $ \prod_{i\in I}F_i$ is $n$-torsion clean, where $n=LCM(|F_i|-1\mid i\in I)$. In particular, the order of any unit of $R$ divides $n$ and thus $m\leq n$ follows.

Let us fix $i\in I$ and let $F=F_i$ with $s=n_i$. Then, any $a\in F$ can be presented as $a=e+u$ with $u^m=1$. Let $k_1,\ldots k_s$ be orders of units in such presentations of all elements of $F$. Then, by construction,  $k=LCM(k_1,\ldots,k_s)$ divides $m$ and also divides $s=|F|-1$ (as $s$ is equal to the order of the group $U(F)$). In particular $k\leq s$. On the other hand, appealing to (1), $F$ is $s$-torsion clean and this forces that $s \leq k$. However, this shows that $k=s=n_i$ divides $m$. That is why, this means that, for any $i\in I$, $n_i=s$ divides $n$.    Consequently, $n=LCN(n_i\mid i\in I)$ divides $m$. By the first part of the proof $m\leq n$, so $n=m$ really follows.

Suppose now that $\prod_{i\in I}F_i$ is $n$-torsion clean and $R$ is a subdirect products of fields $F_i$, $i\in I$. To complete the proof, it is enough to show $R$ is $m$-torsion clean for some $m$. The statement (3) implies that the group $U(R)$ is of finite exponent, say $k$ is the exponent. Then, for any $r\in R$, $e=r^k $ is an idempotent, and $r=(1-e)+((e-1)+r)$ is a clean decomposition of $a$ with $((e-1)+r)^k=1$. This allows us to conclude that $R$ is $m$-torsion clean, for some $m\leq k$, as required.
\end{proof}

The following result, which is needed further in the text, is also of some independent interest. Before stating it, let us recall that idempotents lift modulo an ideal $J$ of $R$ if, for any $a\in R$ such that $a^2-a\in J$, there exists an idempotent $e\in R$ such that $e-a\in J$. If the idempotent $e$ is uniquely determined by the element $a$, then we say that idempotents lift uniquely modulo $I$.

It is known that idempotents lift modulo nil ideals, thus the following lemma applies when $J$ is a nil ideal of a ring $R$.

\begin{lem}\label{idemp}
Let $J\subseteq J(R)$ be an ideal of  $R$. Suppose that idempotents lift modulo $J$. Then the following    conditions are equivalent:
 \begin{enumerate}
   \item  $R$ is an abelian ring;
   \item  $R/J$ is an abelian ring and idempotents lift uniquely modulo $J$.
 \end{enumerate}
\end{lem}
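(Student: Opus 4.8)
The plan is to establish the two implications separately, the engine in both cases being the elementary identity for commuting idempotents together with the standing hypothesis $J\subseteq J(R)$, which lets us invert elements of the form $1+j$ with $j\in J$.

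First I would treat $(1)\Rightarrow(2)$. Assuming $R$ abelian, every idempotent of $R/J$ lifts (by hypothesis) to an idempotent of $R$, which is central, so its image is central; hence $R/J$ is abelian. For uniqueness of the lift, suppose $e,f\in{\rm Id}(R)$ satisfy $e-f\in J$. Since $R$ is abelian, $e$ and $f$ commute, and expanding gives $(e-f)^2=e-2ef+f$ and then $(e-f)^3=e-f$; thus $(e-f)\bigl(1-(e-f)^2\bigr)=0$. Because $e-f\in J\subseteq J(R)$, the factor $1-(e-f)^2$ lies in $1+J(R)\subseteq U(R)$, forcing $e-f=0$. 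So the lift is unique.

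Next, for $(2)\Rightarrow(1)$, assume $R/J$ is abelian and idempotents lift uniquely modulo $J$. Fix $e\in{\rm Id}(R)$ and $r\in R$ and set $e'=e+er(1-e)$. A short computation using $(1-e)e=0$ shows $e'\in{\rm Id}(R)$, while modulo $J$ the idempotent $\bar e$ is central, so $\overline{er(1-e)}=\bar e\,\bar r\,(1-\bar e)=\bar r\,\bar e\,(1-\bar e)=0$; hence $e'-e\in J$. As $e$ and $e'$ are idempotents both congruent to $e$ modulo $J$, unique lifting gives $e'=e$, i.e. $er(1-e)=0$, equivalently $er=ere$. Repeating the argument with $e''=e+(1-e)re$ yields $(1-e)re=0$, i.e. $re=ere$. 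Combining, $er=ere=re$ for every $r$, so every idempotent of $R$ is central, which is $(1)$.

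I do not anticipate a genuine obstacle here: the proof is driven by two short algebraic verifications — that $e+er(1-e)$ is idempotent and that $(e-f)^3=e-f$ for commuting idempotents $e,f$ — together with the bookkeeping of passing to and from $R/J$. The only point requiring care is to note that it is the full hypothesis $J\subseteq J(R)$, not merely nilness of $J$, that makes $1-(e-f)^2$ a unit in the $(1)\Rightarrow(2)$ direction; in the $(2)\Rightarrow(1)$ direction this is not used beyond the standing assumption that idempotents lift modulo $J$.
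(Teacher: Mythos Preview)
Your proof is correct, and the $(2)\Rightarrow(1)$ direction is essentially identical to the paper's (the paper replaces $e$ by $1-e$ to get the second identity rather than forming your $e''$, but this is cosmetic).

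The genuine difference is in the uniqueness part of $(1)\Rightarrow(2)$. The paper does not compute with $(e-f)^3$; instead it cites an external result (Kanwar--Leroy--Matczuk, \cite{KLM}) to the effect that any two idempotents of $R$ that are congruent modulo $J(R)$ are conjugate by a unit, and then uses centrality of idempotents to collapse $ufu^{-1}=f$ and conclude $e=f$. Your argument via $(e-f)^3=e-f$ and the invertibility of $1-(e-f)^2$ is more elementary and entirely self-contained; it exploits the fact, free here since $R$ is abelian, that $e$ and $f$ commute. The paper's route, by contrast, rests on a statement that holds without any commutativity between $e$ and $f$, so it is more general in spirit but at the price of an outside reference. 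In the present context both approaches are equally efficient.
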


\begin{proof} Let $\pi\colon R\rightarrow R/J$ denotes the canonical homomorphism.

$(1)\Rightarrow (2)$. Suppose the ring $R$ is abelian.  Since idempotents lift modulo $J$,   ${\rm Id}(R/J)=\pi({\rm Id}(R))$. Thus the ring $R/J$ is abelian, as $R$ is such. Let $e,f\in {\rm Id}(R)$ be such that $e-f\in J\subseteq J(R)$. Then, by \cite[Corollary 11]{KLM},    $e$ and $f$ are conjugate in $R$, i.e. there exists   $u\in U(R)$ such that $e=ufu^{-1}$. Hoverer, all idempotents of $R$ are central, so $e=f$. This, together with the assumption that idempotents lift modulo $J$ yield that idempotents lift uniquely modulo $J$.

$(2)\Rightarrow (1)$. The commutator of elements $a,b\in R$ will be denoted by $[a,b]:=ab-ba$. Suppose $(2)$ holds and let $e\in {\rm Id}(R)$, $r\in R$. Then $f=e+er(1-e)$ is also an idempotent and $[f,e]=er(1-e)$. By assumption  $R/J$ is abelian, so $\pi([f,e])=0$. This shows that  $er(1-e)\in J$.  Since  $\pi (e)=\pi (f)$ and, by assumption,  idempotents lift uniquely modulo $J$, we obtain $e=f$, i.e. $er(1-e)=0$. Now, replacing $e $ by $1-e$, we also have $(1-e)re=0$, for any $r\in R$. This  means that every idempotent $e$ of $R$ is central, i.e. $R$ is abelian, as required.
\end{proof}
We will need in the sequel the following direct application of  \cite[Theorem 3.2.]{DL}.
\begin{lem} \label{lem L} Let  $R$ be a ring and $u\in R$. Suppose that  $m:={\rm char}(R)$ is finite and $J(R)$ is a nil ideal  of  index $s+1$, where $s\geq 0$. If $ u^t-1\in J(R)$, then $u^{tm^s}=1$.
\end{lem}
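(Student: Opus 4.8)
The plan is to reduce to the case of a unipotent unit and then carry out a short binomial computation; this is exactly the content of \cite[Theorem 3.2]{DL}, so in the paper one may simply cite it, but here is how I would argue. First I would record that $u$ is automatically a unit: setting $j:=u^{t}-1$, the hypothesis says $j\in J(R)$, hence $u^{t}=1+j\in 1+J(R)\subseteq U(R)$, and since $u\cdot u^{t-1}=u^{t-1}\cdot u=u^{t}$ is invertible, it follows that $u\in U(R)$. Moreover, as $J(R)$ is nil of index $s+1$, we have $j^{s+1}=0$. Thus the whole statement reduces to showing that the unipotent unit $v:=u^{t}=1+j$ satisfies $v^{m^{s}}=1$; granting this, $u^{tm^{s}}=(u^{t})^{m^{s}}=v^{m^{s}}=1$.

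For the reduced claim I would expand by the binomial theorem, killing the high powers of $j$:
\[
v^{m^{s}}=(1+j)^{m^{s}}=\sum_{i=0}^{s}\binom{m^{s}}{i}j^{i}=1+\sum_{i=1}^{s}\Bigl(\binom{m^{s}}{i}\cdot 1_{R}\Bigr)j^{i},
\]
so, since $\mathrm{char}(R)=m$, it is enough to verify that $m\mid\binom{m^{s}}{i}$ for every $1\le i\le s$. Fixing a prime $p\mid m$ and writing $a:=v_{p}(m)\ge 1$, the identity $i\binom{m^{s}}{i}=m^{s}\binom{m^{s}-1}{i-1}$ yields $v_{p}\!\bigl(\binom{m^{s}}{i}\bigr)\ge v_{p}(m^{s})-v_{p}(i)=sa-v_{p}(i)$; and for $1\le i\le s$ one has $v_{p}(i)\le\log_{2} i\le\log_{2} s\le s-1\le(s-1)a$, so that $v_{p}\!\bigl(\binom{m^{s}}{i}\bigr)\ge a=v_{p}(m)$. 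Since $p$ was an arbitrary prime divisor of $m$, this gives the required divisibility, the sum above vanishes, and $v^{m^{s}}=1$.

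I do not expect a genuine obstacle here: the passage from $u$ to the unipotent unit $u^{t}$ and the truncation of the binomial expansion via $j^{s+1}=0$ are routine, and the only place demanding a little attention is the $p$-adic estimate of $\binom{m^{s}}{i}$ when the modulus $m$ is not a prime power — which is precisely why it is convenient to phrase the lemma as a direct consequence of \cite[Theorem 3.2]{DL}.
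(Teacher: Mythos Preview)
Your argument is correct and follows exactly the route taken in the paper: reduce to the unipotent unit $v=u^{t}=1+j$ with $j\in J(R)$ and then invoke \cite[Theorem 3.2]{DL}, which says $(1-r)^{m^{s}}=1$ for every $r\in J(R)$ under the stated hypotheses. The only difference is that the paper simply cites that theorem, whereas you additionally unpack its proof via the truncated binomial expansion and the $p$-adic estimate $v_{p}\bigl(\binom{m^{s}}{i}\bigr)\ge v_{p}(m)$ for $1\le i\le s$; this extra detail is sound (your chain $v_{p}(i)\le\log_{2}i\le\log_{2}s\le s-1\le (s-1)a$ is valid for all $s\ge 1$, and the case $s=0$ is trivial since then $J(R)=0$).
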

\begin{proof} \cite[Theorem 3.2.]{DL} states that  if $R$ is a ring satisfying assumptions of the lemma, then  $(1-r)^{m^s}=1$, for any $r\in J(R)$.
  Now, if $u^t-1\in J(R)$, then there exists $r\in J(R)$ such that $u^t=1-r$ and  $u^{tm^s}=1$  follows.
\end{proof}
The above lemma give immediately the following corollary:
\begin{cor}\label{cor L}
Let $R$ be a ring of such that ${\rm char}(R)$ is finite and $J(R)$ is  nil of bounded index.  If the group $U(R/J(R))$ is of finite exponent, then so is $U(R)$. If additionally  $R/J(R)$ is clean (so $R$ is also clean, as units and idempotents lift modulo nil ideals), then   $R$ is  $n$-torsion clean, for some $n\in \N$.
  \end{cor}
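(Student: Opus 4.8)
The plan is to reduce both assertions to Lemma~\ref{lem L}, together with the elementary observation recorded in the Introduction that a clean ring whose unit group has bounded exponent is $n$-torsion clean for some $n$.

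For the first assertion I would set $m={\rm char}(R)$, fix $s\geq 0$ with $J(R)$ nil of index $s+1$, and let $e$ denote the exponent of $U(R/J(R))$. Given an arbitrary $u\in U(R)$, its canonical image $\bar u\in U(R/J(R))$ satisfies $\bar u^{\,e}=\bar 1$, i.e. $u^e-1\in J(R)$. Applying Lemma~\ref{lem L} with $t=e$ then yields $u^{e m^s}=1$, so $U(R)$ has finite exponent dividing $e m^s$.

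For the second assertion I would first observe that $R$ itself is clean: $R/J(R)$ is clean, $J(R)$ is nil, and since idempotents and units lift modulo nil ideals, a clean decomposition modulo $J(R)$ lifts to one in $R$. By the first part $U(R)$ has some finite exponent $t$, so every clean decomposition $r=f+u$ in $R$ automatically satisfies $u^t=1$ and is thus a $t$-torsion clean decomposition. Hence every element of $R$ admits a $t$-torsion clean decomposition, and letting $n$ be the least natural number with this property exhibits $R$ as $n$-torsion clean for some $n\leq t$.

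I do not expect a genuine obstacle here: the whole content sits in Lemma~\ref{lem L}, whose hypotheses are precisely the standing assumptions of finite characteristic and bounded nil index, and which is exactly the device that promotes a bound on the exponent of $U(R/J(R))$ to a bound on the exponent of $U(R)$.
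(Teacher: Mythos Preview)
Your proposal is correct and matches the paper's approach: the paper states this corollary without proof, deriving it immediately from Lemma~\ref{lem L} together with the observation (from the Introduction) that a clean ring whose unit group has bounded exponent is $n$-torsion clean for some $n$. Your write-up simply makes explicit the two-line argument the authors left implicit.
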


\begin{cor}
Let $R$ be a ring of finite characteristic and $J$ a nil ideal of $R$ of bounded index. Then the following conditions are equivalent:
\begin{enumerate}
      \item  $R$ is an $n$-torsion clean ring, for some $n\in \N$.
     \item  $R/J$ is an $t$-torsion clean ring, for some $t\in \N$.
\end{enumerate}
\end{cor}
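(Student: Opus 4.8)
The plan is to establish the two implications separately: $(1)\Rightarrow(2)$ will be immediate, whereas $(2)\Rightarrow(1)$ is the substantive direction and will rest on a lifting argument combined with Lemma~\ref{lem L}, applied to the given ideal $J$.

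For $(1)\Rightarrow(2)$ I would simply record, as already noted in the Introduction, that every homomorphic image of an $n$-torsion clean ring is $t$-torsion clean for some $t\le n$: given $\bar r\in R/J$, lift it to $r\in R$, take an $n$-torsion clean decomposition $r=e+u$ with $u^{n}=1$, and reduce modulo $J$ to obtain $\bar r=\bar e+\bar u$ with $\bar e\in{\rm Id}(R/J)$ and $\bar u^{\,n}=1$. Hence every element of $R/J$ has an $n$-torsion clean decomposition, the least admissible exponent $t$ exists, and $R/J$ is $t$-torsion clean. Neither the finiteness of ${\rm char}(R)$ nor the bounded index of $J$ enters here.

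For $(2)\Rightarrow(1)$ I would argue as follows. Put $m={\rm char}(R)$, which is finite by hypothesis, and let $s+1$ be the index of nilpotence of $J$. Fix $r\in R$. Since $R/J$ is $t$-torsion clean, write $\bar r=\bar e+\bar u$ with $\bar e$ an idempotent, $\bar u\in U(R/J)$, and $\bar u^{\,t}=1$. As $J$ is nil, idempotents lift modulo $J$, so I would pick $e\in{\rm Id}(R)$ lifting $\bar e$ and set $u:=r-e$; then $u$ reduces to $\bar u$, so $u\in U(R)$ (an element of $R$ that is invertible modulo the nil ideal $J$ is invertible) and $u^{\,t}-1\in J$. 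The crucial step is then to invoke Lemma~\ref{lem L}, whose proof uses only that $J$ is a nil ideal of bounded index and that ${\rm char}(R)$ is finite, to conclude that $u^{\,tm^{s}}=1$. Thus $r=e+u$ is a $tm^{s}$-torsion clean decomposition of $r$, and since $r$ was arbitrary, $R$ is clean and every element of $R$ admits a $tm^{s}$-torsion clean decomposition; hence $R$ is $n$-torsion clean for some $n\le tm^{s}$.

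The step I expect to need the most care is precisely the appeal to Lemma~\ref{lem L}: unlike in Corollary~\ref{cor L}, the ideal $J$ here need not equal $J(R)$, and indeed $J(R)$ need not be of bounded index under the present hypotheses (cf.\ Example~\ref{jacobson exm}), so Corollary~\ref{cor L} cannot be quoted verbatim. What saves the argument is that the bound $u^{\,tm^{s}}=1$ depends only on ${\rm char}(R)$ and on the index of nilpotence of $J$, not on $J(R)$. If one prefers not to re-cite \cite{DL} for an arbitrary nil ideal of bounded index, the needed identity $(1-a)^{m^{s}}=1$ for $a\in J$ can be verified directly by expanding $(1-a)^{m^{s}}$, discarding the terms involving $a^{k}$ with $k\ge s+1$, and using that $m$ divides $\binom{m^{s}}{k}$ for $1\le k\le s$.
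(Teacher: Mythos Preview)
Your proposal is correct and follows essentially the same route as the paper: lift the idempotent from the $t$-torsion clean decomposition of $\bar r$, obtain a unit $u$ with $u^{t}-1\in J$, and invoke Lemma~\ref{lem L} to get $u^{tm^{s}}=1$, whence $R$ is $n$-torsion clean for some $n\le tm^{s}$. Your write-up is in fact tidier than the paper's in two places: you set $u:=r-e$ after lifting $e$ (so $r=e+u$ holds on the nose, whereas the paper lifts $e$ and $u$ independently and never verifies $r=e+u$), and you correctly flag that Lemma~\ref{lem L} is stated for $J(R)$ rather than an arbitrary nil ideal $J$ of bounded index, supplying the direct verification of $(1-a)^{m^{s}}=1$ for $a\in J$ that justifies the application.
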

\begin{proof}
 Suppose $R/J$ is an $m$-torsion clean ring, for some $m\in \N$. Let $r\in R$. Since units and idempotents lift modulo $J$ we can find $e\in {\rm Id}(R)$ and $u\in U(R)$ such that $\bar r=\bar e+\bar u$ is an $t$-torsion clean decomposition of $\bar r$ in $R/J$, where $\bar r$ denotes the natural image of $r$ in $R/J$. By Lemma \ref{lem L}, $u^{}=1$, where $m={\rm char}(R)$ and $s+1$ is the nil index of the ideal  $J$. This implies that $R$ is $n$ torsion clean, for some $n\leq tm^s$.

 The reverse implication is clear.
\end{proof}
The following theorem offers a characterization of strongly $n$-torsion clean abelian rings (compare with Theorem \ref{pi}).

\begin{thm}\label{ab}
For a ring $R$, the following conditions are equivalent:
\begin{enumerate}
  \item There exists $n\in \mathbb{N}$ such that $R$ is an $n$-torsion clean abelian ring.
   \item  \begin{enumerate}
            \item  $\mbox{\rm char}(R)$ is finite;
            \item The Jacobson radical $J(R)$ is nil of bounded index;
            \item Idempotents lift uniquely modulo $J(R)$;
            \item  $R/J(R)$ is a subdirect product of  finite fields $F_i$,  where $i$ ranges over some index set  $I$,  such  that $LCM(|F_i|-1\mid i\in I)$ exists.
          \end{enumerate}
          \item $R$ is an abelian clean ring such that the unit group $U(R)$ is of finite exponent.
\end{enumerate}
\end{thm}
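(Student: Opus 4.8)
The plan is to prove the cyclic chain of implications $(1)\Rightarrow(2)\Rightarrow(3)\Rightarrow(1)$. For $(1)\Rightarrow(2)$ the crucial starting remark is that an \emph{abelian} $n$-torsion clean ring is automatically \emph{strongly} $n$-torsion clean, since every idempotent is central; hence Theorem~\ref{pi} applies and yields at once $(2)(a)$ (finite characteristic) and $(2)(b)$ ($J(R)$ nil of bounded index). As $J(R)$ is then nil, idempotents lift modulo $J(R)$, so Lemma~\ref{idemp} applied to the abelian ring $R$ gives $(2)(c)$ (idempotents lift uniquely modulo $J(R)$) and also tells us that $\bar R:=R/J(R)$ is abelian. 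Everything therefore reduces to establishing $(2)(d)$.

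To get $(2)(d)$ I would analyze $\bar R$: it satisfies $J(\bar R)=0$, it inherits the polynomial identity $(x^n-1)((x-1)^n-1)=0$ from Theorem~\ref{pi}(1), and it is clean (being a homomorphic image of the clean ring $R$) and hence an exchange ring. Write $\bar R$ as a subdirect product of its primitive quotients $\bar R/P_i$, $i\in I$. By Kaplansky's theorem each $\bar R/P_i$ is simple Artinian, say $\bar R/P_i\cong\mathbb{M}_{k_i}(D_i)$ with $D_i$ a division ring finite-dimensional over its center $Z_i$; since every element of the field $Z_i$ is a root of the fixed polynomial $(x^n-1)((x-1)^n-1)$, one has $|Z_i|\le 2n$, so $D_i$ is a finite division ring and hence, by Wedderburn's little theorem, a finite field. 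Finally, because $\bar R$ is an exchange ring, idempotents lift modulo $P_i$; lifting the matrix idempotent $e_{11}\in\mathbb{M}_{k_i}(D_i)$ to $\bar R$ and using that $\bar R$ is abelian forces the image $e_{11}$ to be central in $\mathbb{M}_{k_i}(D_i)$, whence $k_i=1$. Thus $\bar R$ is a subdirect product of the finite fields $F_i:=\bar R/P_i$. Since $\bar R$ is $m$-torsion clean for some $m$ (a homomorphic image of an $n$-torsion clean ring), Proposition~\ref{fields}(4) and then Proposition~\ref{fields}(3) show that $LCM(|F_i|-1\mid i\in I)$ exists, which is exactly $(2)(d)$.

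For $(2)\Rightarrow(3)$: by $(2)(d)$ and Proposition~\ref{fields}(3)--(4), $\bar R=R/J(R)$ is $t$-torsion clean with $t=LCM(|F_i|-1)$, so in particular $\bar R$ is clean and $U(\bar R)$ has finite exponent. As $J(R)$ is nil by $(2)(b)$, idempotents and units lift modulo $J(R)$, so $R$ is clean, and Corollary~\ref{cor L} — fed with $(2)(a)$, $(2)(b)$ and the finite exponent of $U(\bar R)$ — gives that $U(R)$ is of finite exponent. That $R$ is abelian follows from Lemma~\ref{idemp}: $\bar R$, being a subdirect product of fields, is commutative, hence abelian, and idempotents lift uniquely modulo $J(R)$ by $(2)(c)$. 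This is $(3)$. For $(3)\Rightarrow(1)$ nothing deep is required: if $R$ is clean with $U(R)$ of exponent $s$, every clean decomposition $r=e+u$ automatically has $u^s=1$, so $R$ is $n$-torsion clean for the least $n\le s$ that works, and $R$ is abelian by hypothesis.

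The delicate step — the one that is not mere bookkeeping — is the identification, inside $(1)\Rightarrow(2)$, of $R/J(R)$ as a subdirect product of \emph{finite} fields. It combines four ingredients: Kaplansky's structure theorem for primitive PI rings, the elementary bound on the number of roots of $(x^n-1)((x-1)^n-1)$ in a field (which makes the relevant centers finite), Wedderburn's little theorem, and the interplay of the exchange property (idempotents lift modulo every ideal) with the abelian hypothesis to rule out matrix blocks of size $\ge 2$. Everything else is an assembly of Theorem~\ref{pi}, Lemma~\ref{idemp}, Corollary~\ref{cor L} and Proposition~\ref{fields}.
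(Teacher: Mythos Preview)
Your proof is correct and follows the same cyclic scheme $(1)\Rightarrow(2)\Rightarrow(3)\Rightarrow(1)$ as the paper, invoking the same auxiliary results (Theorem~\ref{pi}, Lemma~\ref{idemp}, Proposition~\ref{fields}, Corollary~\ref{cor L}) at the same places.

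The only substantive difference is in the argument that each primitive quotient $\bar R/P_i$ is a finite field. The paper first argues that $\bar R/P_i$ is abelian (via Lemma~\ref{idemp}), hence a division algebra, and then appeals to the fact that a division algebra algebraic over a finite field is commutative (a theorem of Jacobson/Faith), concluding finiteness from the splitting field of $\phi(x)$. You instead bound the center by $2n$ first, deduce $D_i$ is a \emph{finite} division ring, apply Wedderburn's little theorem, and then kill the matrix size $k_i$ by explicitly lifting $e_{11}$ through the exchange property of the clean ring $\bar R$. Your route is arguably cleaner: it uses only the classical Wedderburn theorem rather than the stronger Jacobson--Faith result, and it makes transparent the step the paper glosses over (namely, \emph{why} idempotents of $\bar R/P_i$ inherit centrality from $\bar R$ --- this requires lifting, which your exchange-ring remark justifies).
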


\begin{proof}
$(1)\Rightarrow (2)$. Suppose $R$ is an $n$-torsion clean abelian ring. Then, Theorem \ref{pi} guarantees that $\mbox{\rm char}(R) $ is finite and $J(R)$ is nil of finite index. In particular, $R$ has properties (a) and (b). Since $J(R)$ is a nil ideal, idempotents lift modulo $J(R)$ and, by Lemma \ref{idemp}, they lift uniquely, so (c) holds.

Finally, by Theorem~\ref{pi}(1), $R/J(R)$ satisfies the polynomial identity $\phi(x)=0$, where  $\phi(x)=(x^n-1)((x-1)^n-1)\in  \Z[x]$. Therefore, $R/J(R)$ is a subdirect product of primitive PI-rings, say $R/J(R)$ is a subdirect product of primitive rings $\{R_i\}_{i\in I}$, for some index set $I$. Let us fix $i\in I$. Then $R_i$, as a homomorphic image of $R$, also satisfies the identity $\phi(x)=0$. Consequently, by the classical Kaplansky's theorem (cf. \cite{R}), each $R_i$ has to be a  central simple algebra, finite dimensional over its center $C$. Notice that, as $\mbox{\rm char}(R)$ is finite, $C$ is a field of nonzero characteristic, say $\F_p\subseteq C$. Observe also that, by Lemma~\ref{idemp},   $R_i$ is an abelian ring. This implies that $R_i$ has to be   a division algebra over $\F_p$.    It is known (cf. \cite[Corollary from page 48]{F}) that every division algebra which is algebraic over a finite field is necessarily commutative. In particular, $R_i$ has to be a field. In fact, it is a finite field, as $R_i$ is contained in the spitting field of $\phi(x)\in\F_p[x]$. The above shows that $R/J(R)$ is a subdirect product of finite fields. Moreover, $R/J(R)$ is also, as a homomorphic image of $R$, strongly $n'$-torsion clean, for some $n'\leq n$.  Therefore, making use of Proposition~\ref{fields}, we see that $R$ satisfies the property (d). This completes the proof of the implication.

$(2)\Rightarrow (3)$. Suppose (2) holds. We know, by (d) and  Proposition~\ref{fields},  that  $R/J(R)$ is a clean ring  with the unit group $U(R)$ of finite exponent.
The property  (b) guarantee that $J(R)$ is a nil ideal and  Corollary \ref{cor L} yields that $R$ is a clean ring with the unit group $U(R)$ is of finite exponent. Finally,  properties (d),  (c) together with  Lemma \ref{idemp} imply that $R$ is an abelian ring.

The implication $(3)\Rightarrow (1)$ is obvious.
\end{proof}

In parallel to Theorem~\ref{ab}, one can state the following:

\begin{thm} \label{thm stn} For a ring $R$, the following conditions are equivalent:
\begin{enumerate}
  \item  $R$ is strongly $n$-torsion clean, for some $n\in \N$.
  \item  $R$ is strongly clean and $U(R)$ is of finite exponent.
\end{enumerate}
\end{thm}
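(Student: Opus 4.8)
The plan is to dispose of the trivial implication first and then reduce the hard direction to a statement about the semiprimitive quotient $R/J(R)$, where the structure theory of PI-rings provides the crucial bound.

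For $(2)\Rightarrow(1)$: if $R$ is strongly clean and $s$ denotes the exponent of $U(R)$, then any strongly clean decomposition $r=e+u$ of an element $r\in R$ automatically satisfies $u^s=1$, hence is a strongly $s$-torsion clean decomposition. Thus every element of $R$ admits a strongly $s$-torsion clean decomposition, so $R$ is strongly $n$-torsion clean for the least natural number $n$ with this property (and $n$ divides $s$ by Lemma~\ref{descrip}).

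For $(1)\Rightarrow(2)$, being strongly clean is built into the definition of a strongly $n$-torsion clean ring, so the only thing to prove is that $U(R)$ has finite exponent. By Theorem~\ref{pi}, $R$ satisfies the identity $\phi(x):=(x^n-1)((x-1)^n-1)=0$, $\mathrm{char}(R)$ is finite, and $J(R)$ is nil of bounded index. By the first assertion of Corollary~\ref{cor L} it therefore suffices to show that $U(R/J(R))$ has finite exponent. Put $\bar R=R/J(R)$; this ring is semiprimitive and still satisfies $\phi(x)=0$, so I would write it as a subdirect product of primitive rings $\{R_i\}_{i\in I}$, each of which inherits the identity $\phi(x)=0$.

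The main point, and the only genuinely non-routine step, is that the family $\{R_i\}_{i\in I}$ realizes only finitely many unit groups. Indeed, Kaplansky's theorem (cf.~\cite{R}) forces each $R_i$ to be a simple ring of dimension at most $n^2$ over its center $C_i$; since $\mathrm{char}(R)$ is finite, $C_i$ is a field of prime characteristic every element of which is a root of the degree-$2n$ polynomial $\phi$, whence $|C_i|\leq 2n$. Hence each $R_i$ is a finite simple ring, so by Wedderburn's theorem $R_i\cong\mathbb{M}_{k_i}(\mathbb{F}_{q_i})$ with $k_i\leq n$ and $q_i\leq 2n$, and only finitely many such matrix rings exist. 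Letting $N$ be the least common multiple of the exponents of the finitely many groups $\mathrm{GL}(k_i,\mathbb{F}_{q_i})$ that actually occur, the embedding $U(\bar R)\hookrightarrow\prod_{i\in I}U(R_i)$ shows that every unit of $\bar R$ has order dividing $N$. Feeding the resulting statement ``$U(R/J(R))$ has finite exponent'' back through Corollary~\ref{cor L} (that is, through Lemma~\ref{lem L}) then yields that $U(R)$ has finite exponent, completing the argument. The obstacle, such as it is, is entirely concentrated in bounding the exponent of $U(\bar R)$: what makes it work is that the polynomial identity together with finiteness of the characteristic pins the centers $C_i$ down to finite fields of size at most $2n$, so that the (possibly infinite) subdirect decomposition of $\bar R$ contributes only finitely many distinct unit groups.
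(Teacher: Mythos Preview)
Your proof is correct and follows essentially the same route as the paper's: invoke Theorem~\ref{pi} for the identity, finite characteristic, and bounded nil radical; decompose $R/J(R)$ as a subdirect product of primitive PI-rings, which by Kaplansky are matrix rings over finite fields; bound both the matrix sizes and the field sizes uniformly so that only finitely many unit groups occur; and then lift the finite exponent through Lemma~\ref{lem L}/Corollary~\ref{cor L}. The only cosmetic differences are that you bound $|C_i|\le 2n$ by a direct root count where the paper invokes the splitting field of $\phi$, and you read off $k_i\le n$ from Kaplansky's dimension bound where the paper cites Amitsur--Levitzki.
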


\begin{proof} $(1)\Rightarrow (2)$. Suppose $R$ is strongly $n$-torsion.   Then clearly $R$ is strongly clean. Next, observe that Theorem~\ref{pi} implies that  $R$ is a PI-ring satisfying an identity of degree $2n$ and $J(R)$ is a nil ideal of bounded index. Using similar arguments as in the proof of Theorem~\ref{ab}, one can see that the quotient $R/J(R)$ is a subdirect product of a matrix rings, say $R_i=M_{m_i}(F_i)$, over finite fields $F_i$. Notice that, as $\mbox{\rm char}(R)$ is finite, the set of characteristics of fields from the set $\mathcal{F}=\{F_i \mid i\in I\}$ is finite and also the number of fields of a given characteristic $p$ is finite, as every such field is contained in the splitting field of a given polynomial of degree $2n$. Thus  there are only finitely many classes of isomorphic fields in the set  $\mathcal{F}$. Moreover, by the classical Amitsur-Levitzki's theorem (cf. \cite{R}), each $m_i$ is not grater than $n$, as every $R_i$ satisfies a polynomial identity of degree $2n$. Therefore, the unit group of the product $\prod_{i\in I}R_i$ is a group of finite exponent. By  Theorem~\ref{pi}, $ {\rm char}(R)$ is finite and $J(R)$ is a  nil ideal of bounded index.  Now,  we can apply   Lemma \ref{lem L} to obtain that the group $U(R)$  is of finite exponent.

 The implication $(2)\Rightarrow (1)$ is clear.
\end{proof}

We now have at our disposal all the necessary information to present a satisfactory structural characterization of strongly $n$-torsion clean rings, for all odd $n$.

\begin{thm}\label{comm}
Suppose  $n\in \mathbb{N}$ is odd. For a ring $R$, the following conditions are equivalent:
\begin{enumerate}
\item  $R$ is a strongly $n$-torsion clean ring;
\item There exist  integers $k_1,\dots ,k_t\geq 1$ such that $n=LCM(2^{k_1}-1, \ldots, 2^{k_t}-1)$ and $R$ is a subdirect product of copies of fields $\mathbb{F}_{2^{k_i}}$, $1\leq i\leq t$;
    \item  $R$ is a  clean ring in which orders of all units are odd, bounded by $n$ and there exists a unit of order $n$.
\end{enumerate}
\end{thm}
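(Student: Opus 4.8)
The plan is to establish the cycle of implications $(1)\Rightarrow(2)\Rightarrow(3)\Rightarrow(1)$, leaning on the structure theory already developed (Theorems~\ref{pi} and \ref{ab}, Proposition~\ref{fields}, Lemma~\ref{descrip}) together with two elementary observations about torsion abelian groups: that a suitable product of finitely many elements has order equal to the $LCM$ of their orders, and hence that a torsion abelian group whose element orders are all $\le n$ and which contains an element of order $n$ has exponent exactly $n$. Throughout one must keep in mind that, as soon as the ring in question turns out to be commutative, every clean decomposition commutes, so ordinary and strong $n$-torsion cleanness coincide (with the same minimal $n$); this is what lets the three conditions refer to the same value of $n$.

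For $(1)\Rightarrow(2)$: since $n$ is odd, Theorem~\ref{pi}(4) gives that $R$ is reduced of characteristic $2$ with $J(R)=0$, and Theorem~\ref{pi}(1) gives the identity $\phi(x):=(x^n-1)((x-1)^n-1)=0$. Being semiprimitive, $R$ is a subdirect product of primitive rings $R_i$; each $R_i$ is a primitive PI-ring, hence by Kaplansky's theorem a simple algebra finite-dimensional over its center, and it is reduced (as a quotient of $R$), so $R_i$ is a division algebra. Every element of $R_i$ is a root of $\phi$, so its center is algebraic over $\mathbb{F}_2$, whence by the theorem on division algebras algebraic over a finite field (used already in the proof of Theorem~\ref{ab}) $R_i$ is commutative, i.e. a field, and it is finite because its elements lie among the finitely many roots of $\phi$. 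Thus $R$ is a subdirect product of finite fields of characteristic $2$; in particular $R$ is commutative, and applying Proposition~\ref{fields}(4) and (3) to the $n$-torsion clean ring $R$ forces $LCM(|R_i|-1\mid i)$ to exist and equal $n$. Only finitely many isomorphism types $\mathbb{F}_{2^{k_1}},\dots,\mathbb{F}_{2^{k_t}}$ can occur (each $|R_i|-1$ divides $n$), and then $n=LCM(2^{k_1}-1,\dots,2^{k_t}-1)$ with $R$ a subdirect product of copies of these fields, which is (2).

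For $(2)\Rightarrow(3)$: $R$ is a subring of a product of the fields $\mathbb{F}_{2^{k_i}}$, hence commutative, reduced, of characteristic $2$. By Proposition~\ref{fields}(3) the full product is $n$-torsion clean, so by part (4) $R$ is $n$-torsion clean, in particular clean. Since $U(R)$ embeds into the product of the cyclic groups $\mathbb{F}_{2^{k_i}}^{*}$, every unit of $R$ has odd order dividing $n$, so its order is $\le n$; and as $R$ is commutative and $n$-torsion clean, Lemma~\ref{descrip}(2) provides a unit of order exactly $n$. This is (3).

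For $(3)\Rightarrow(1)$, which I expect to be the crux, I would first translate the hypotheses on $U(R)$ into ring structure: since every unit has odd order and $o(-1)\mid 2$, we get $-1=1$, so $\operatorname{char}(R)=2$; then $a^2=0$ would make $1+a$ a unit of order dividing $2$, forcing $a=0$, so $R$ is reduced, hence abelian. The orders of units being bounded by $n$, $U(R)$ has finite exponent, so Theorem~\ref{ab} applies and yields that $R$ is $m$-torsion clean for some $m$, $J(R)$ is nil, and $R/J(R)$ is a subdirect product of finite fields $F_i$; since $R$ is reduced, $J(R)=0$, so $R$ itself is such a subdirect product, hence commutative, with each $F_i$ of characteristic $2$. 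It then remains to show $m=n$: Proposition~\ref{fields}(3) and (4) give $m=LCM(|F_i|-1\mid i)$, whence $\exp(U(R))\mid m$ via the embedding $U(R)\hookrightarrow\prod_i F_i^{*}$, while Lemma~\ref{descrip} gives $m\mid\exp(U(R))$, so $m=\exp(U(R))$; and by the group-theoretic observation above, $\exp(U(R))=n$ because $U(R)$ contains an element of order $n$ and all element orders are $\le n$. Hence $m=n$, $R$ is $n$-torsion clean, and since $R$ is abelian every clean decomposition is already strong, so $R$ is strongly $n$-torsion clean. The real work is concentrated in this implication: the hypotheses of (3) speak only about the unit group, and converting them into the global statement that $R$ is a subdirect product of fields of characteristic $2$ forces a detour through Theorem~\ref{ab} and hence through the full PI structure theory, after which the one genuinely delicate point is tracking the \emph{exact} value of $n$ rather than merely a divisor of it.
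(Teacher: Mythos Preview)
Your proof is correct and follows essentially the same route as the paper. The only structural difference is that the paper first establishes the equivalence $(1)\Leftrightarrow(2)$ (the reverse implication being immediate from Proposition~\ref{fields}) and then, in $(3)\Rightarrow(1)$, simply invokes this equivalence for the odd integer $m$ to pin down $m=n$; you instead track $m=\exp(U(R))=n$ directly via Lemma~\ref{descrip} and the elementary fact about abelian torsion groups, which amounts to the same computation unpacked. Your treatment of $(2)\Rightarrow(3)$ is in fact more careful than the paper's, which dismisses it as a ``tautology'' while the existence of a unit of order exactly $n$ really does require Lemma~\ref{descrip}(2).
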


\begin{proof}
$(1)\Rightarrow (2)$. Suppose $R$ is a strongly $n$-torsion clean ring. Then, by Theorem \ref{pi} (4), $R$ is a reduced ring of characteristic 2 and $J(R)=0$. Thus, as every reduced ring is abelian, we can apply Theorem \ref{ab} to obtain that $R$ is a subdirect product of finite fields $F_i$ of characteristic 2, where $i\in I$, for some index set $I$. Now, Proposition~\ref{fields} completes the proof of the implication.

The reverse implication (2) $\Rightarrow$ (1) is a direct consequence of Proposition~\ref{fields}. The implication
$(2)\Rightarrow (3)$ is a tautology.

$(3)\Rightarrow (1)$. Let $R$ be as in (3). Then, as $(-1)^2=1$ and $R$ has no units of even order, $-1=1$, i.e., $\mbox{\rm char}(R)=2$. Let us observe that $R$ has to be reduced. Indeed, if $r^2=0$ for some $r\in R$, then $(1+r)^2=1+r^2=1$. Using again the fact that  $R$ has no units of even order, we get $r=0$. It is known that in a reduced ring all idempotents are central. Moreover, by assumption, $R$ is a clean ring and, as every unit of $R$ is of finite order bounded by $n$, the ring must be   strongly $m$-torsion clean, for some $m\leq n!$. Now, because orders of units are odd, $m$ has to be odd (as $u^{2k}=1$ yields $u^k=1$, when $o(u)$ is odd). Furthermore, bearing in mind the equivalence of statements (1) and (2), we conclude that $n=m$, as required.
\end{proof}

It is worth to mention certain slightly unexpected, non-trivial consequences of the above theorem. Namely, not every odd natural number $n$ can serve as torsion degree of strongly $n$-torsion clean rings and, for odd,    $n$-torsion clean rings  are always commutative.

Notice that, because every finite ring is clean, Theorem~\ref{comm} forces the following:

\begin{cor}\label{finite}
For a finite ring $R$ the following conditions are equivalent:
\begin{enumerate}
\item  $R$ is strongly $n$-torsion clean for some odd $n$;
\item $R$ has no units of even order;
\item $R$ is isomorphic to a finite direct product of fields of characteristic $2$.
\end{enumerate}
\end{cor}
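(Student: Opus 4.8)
The plan is to prove the cycle $(1)\Rightarrow(2)\Rightarrow(3)\Rightarrow(1)$, using throughout that a finite ring is automatically clean. The implication $(1)\Rightarrow(2)$ is immediate from Theorem~\ref{comm}: the equivalence $(1)\Leftrightarrow(3)$ there says that a strongly $n$-torsion clean ring with $n$ odd has all units of odd order, so in particular it has no unit of even order. The implication $(3)\Rightarrow(1)$ is equally quick: if $R\cong \mathbb{F}_{2^{k_1}}\times\cdots\times\mathbb{F}_{2^{k_t}}$, then $R$ is commutative, so every clean decomposition is automatically strongly clean, and Proposition~\ref{fields}(2) shows $R$ is $n$-torsion clean — hence strongly $n$-torsion clean — with $n=LCM(2^{k_1}-1,\ldots,2^{k_t}-1)$, which is odd since each $2^{k_i}-1$ is.

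The substance is $(2)\Rightarrow(3)$, which I would carry out as follows. Since $R$ is finite it is Artinian, so by Wedderburn--Artin together with Wedderburn's little theorem we may write $R/J(R)\cong \mathbb{M}_{m_1}(\mathbb{F}_{q_1})\times\cdots\times\mathbb{M}_{m_\ell}(\mathbb{F}_{q_\ell})$. Units lift modulo $J(R)$, so the induced group homomorphism $U(R)\twoheadrightarrow U(R/J(R))=GL_{m_1}(\mathbb{F}_{q_1})\times\cdots\times GL_{m_\ell}(\mathbb{F}_{q_\ell})$ is surjective; since $R$ has no unit of even order, Cauchy's theorem gives $2\nmid|U(R)|$, hence each $|GL_{m_i}(\mathbb{F}_{q_i})|$ is odd. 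From $|GL_m(\mathbb{F}_q)|=q^{\binom{m}{2}}\prod_{j=1}^{m}(q^j-1)$ one reads off that oddness forces $m_i=1$ and $q_i$ even, i.e.\ $q_i=2^{k_i}$ (for $m\geq2$ the order is always even, and for $m=1$ it equals $q-1$). Thus $R/J(R)\cong \mathbb{F}_{2^{k_1}}\times\cdots\times\mathbb{F}_{2^{k_t}}$, a ring of characteristic $2$.

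It remains to push this information back up to $R$, which need not a priori be commutative (witness $T_2(\mathbb{F}_2)$). Since $R/J(R)$ has characteristic $2$ we get $2\cdot 1_R\in J(R)$, and $J(R)$ is nilpotent, so $\mbox{\rm char}(R)=2^{j}$ for some $j\geq 1$; were $j\geq 2$, the element $u=1+2^{j-1}\cdot 1_R\in 1+J(R)\subseteq U(R)$ would be $\ne 1$ and satisfy $u^2=1+2^{j}\cdot 1_R+2^{2j-2}\cdot 1_R=1$, contradicting $(2)$, so $\mbox{\rm char}(R)=2$. Finally, if $J(R)\ne 0$, pick $0\ne r$ in the last nonzero power of $J(R)$, so that $r^2=0$; then in characteristic $2$ the unit $1+r\ne 1$ has $(1+r)^2=1$, again contradicting $(2)$. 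Hence $J(R)=0$ and $R=R/J(R)\cong \mathbb{F}_{2^{k_1}}\times\cdots\times\mathbb{F}_{2^{k_t}}$, which is $(3)$.

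I expect the main (though essentially bookkeeping) obstacle to be precisely the implication $(2)\Rightarrow(3)$: one has to transport the ``no even-order unit'' hypothesis downward from $R$ to $R/J(R)$, via lifting of units, in order to eliminate matrix blocks of size $\geq 2$ and fields of odd residue characteristic, and then transport it upward again, via the explicit involutions $1+2^{j-1}\cdot 1_R$ and $1+r$, to force $\mbox{\rm char}(R)=2$ and $J(R)=0$. The two easy directions reduce entirely to Theorem~\ref{comm} and Proposition~\ref{fields}.
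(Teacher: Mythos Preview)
Your proof is correct. The two easy implications $(1)\Rightarrow(2)$ and $(3)\Rightarrow(1)$ agree with the paper's route via Theorem~\ref{comm} and Proposition~\ref{fields}. Where you genuinely diverge is in $(2)\Rightarrow(3)$: the paper simply observes that a finite ring with no unit of even order is clean with all unit orders odd and bounded, hence satisfies condition~(3) of Theorem~\ref{comm}, so Theorem~\ref{comm} gives a subdirect product of fields $\mathbb{F}_{2^{k_i}}$, and then the Chinese Remainder Theorem upgrades a finite subdirect product of fields to a direct product. Your argument instead bypasses Theorem~\ref{comm} (and with it the PI machinery of Theorem~\ref{pi}, Lemma~\ref{reduced}, Kaplansky's theorem, etc.) in favour of a direct structural attack: Wedderburn--Artin plus the order formula for $GL_m(\mathbb{F}_q)$ identifies $R/J(R)$, and the explicit involutions $1+2^{j-1}\cdot 1_R$ and $1+r$ (with $r^2=0$) kill the characteristic and the radical. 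This is more elementary and self-contained; the paper's route is shorter on the page only because the heavy lifting has already been done in Theorem~\ref{comm}.
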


\begin{proof}
By the  Chinese Remainder Theorem, any subdirect product of finite number of fields is isomorphic to a direct product of fields. Now, the corollary is a straightforward consequence Theorem~\ref{comm}.
\end{proof}

We close the paper with some problems of interest.

\begin{pro} The matrix ring $\mathbb{M}_n(\mathbb{F}_{2^k})$ is always $m$-torsion clean for some $m$. Compute $m$ in terms of $n$ and $k$; is $m=n$ if $k=1$?
\end{pro}

Recall that some basic observations related to the above problem can be found in Proposition~\ref{matrices} and Examples \ref{nc} and \ref{ex mat}. In particular  $M_2(\F_2)$ is 2-torsion clean and, when $n\in\{3,4\}$ then $M_n(\F_2)$ is $m$-torsion clean, where $2<m\leq 4$. Christian Lomp checked for us, with the help of SageMath,  that $n=m$ in the above cases.
%

We have seen in Theorem \ref{thm stn} that    strongly $n$-torsion clean rings have units group $U(R)$ of finite exponent. For odd $n$, by Theorem~\ref{comm}, $n=\mbox{exp}(U(R))$.
  Example~\ref{nc} shows also such equality in the case of the ring $M_2(\F_2)$. We were kindly informed by Pace Nielsen, that such equality also holds for $M_3(\F_2)$, i.e. $M_3(\F_2)$ is strongly 84-torsion clean. Notice also that   Example \ref{jacobson exm} offers yet another instance of equality $n= \mbox{exp}(U(R))$.

Thus we pose the following two questions.
\begin{pro}Let $R$ be a strongly $n$-torsion clean ring. Is it true that $n= \mbox{exp}(U(R))$?
\end{pro}

\begin{pro} Let $R$ be an $n$-torsion clean ring.  Is then necessary  $U(R)$ of finite exponent?
\end{pro}

For odd $n$, strongly $n$-torsion clean rings were characterized in Theorem~\ref{comm}. Besides, Theorem~\ref{ab} offers a description of strongly $n$-torsion clean rings with extra assumption that the  considered rings are abelian. So, we come to

\begin{pro}\label{st even}
Characterize strongly $n$-torsion clean rings, for even $n\in \N$.
\end{pro}
 If $R$ is not abelian, then   Theorem \ref{pi} (5) and arguments used in the proof of Theorem \ref{thm stn} show  that, modulo the Jacobson radical (which is nil of bounded index), Question \ref{st even} essentially reduces to the  investigation of matrix rings over finite fields of characteristic dividing $n$.

It is also worthwhile noticing that (strongly) $2$-torsion clean rings were classified in \cite{D} under the name (strongly) invo-clean rings by using another approach. In fact, $R$ is strongly invo-clean if  and only if  $R\cong R_1\times R_2$, where $R_1$ is a ring for which $R_1/J(R_1)$ is boolean with $z^2=2z$ for every $z\in J(R_1)$, and $R_2$ is a ring which can be  embedded in a direct product of copies of the field $\F_3$.

\vskip2pc
\noindent \bf Acknowledgments. \rm   We would like to thank   Christian Lomp and Pace Nielsen for helpful discussion and support in computations.

\end{document}